\documentclass[11pt]{article}

\usepackage{amsmath, amsfonts, amsthm, graphicx}
\usepackage[top=21mm, bottom=22mm, left=21mm, right=21mm]{geometry}
\usepackage{comment}
\usepackage{hyperref}
\hypersetup{
	colorlinks=true,
	linkcolor=blue,
	filecolor=magenta,      
	urlcolor=cyan,
	citecolor=blue
}
\usepackage{enumitem}
\usepackage{aliascnt} 
\usepackage[none]{hyphenat}

\newtheorem{theorem}{Theorem}[section]

\newaliascnt{lemma}{theorem}
\newtheorem{lemma}[lemma]{Lemma}
\aliascntresetthe{lemma}
\newaliascnt{claim}{theorem}
\newtheorem{claim}[claim]{Claim}
\aliascntresetthe{claim}
\newaliascnt{question}{theorem}

\aliascntresetthe{question}

\usepackage{cleveref}
\crefname{lemma}{lemma}{lemmas}
\Crefname{lemma}{Lemma}{Lemmas}
\crefname{claim}{claim}{claims}
\Crefname{claim}{Claim}{Claims}
\crefname{question}{question}{questions}
\Crefname{question}{Question}{Questions}

\theoremstyle{definition}
\newtheorem{definition}{Definition}

\newcommand{\typeone}{\mathbf{T1}}
\newcommand{\typetwo}{\mathbf{T2}}
\newcommand{\typethree}{\mathbf{T3}}
\newcommand{\typefour}{\mathbf{T4}}
\newcommand{\typezero}{\mathbf{T0}}
\newcommand{\eps}{\varepsilon}

\DeclareMathOperator{\tw}{tw}

\title{Ramsey theory of low-degree semialgebraic relations}
\author{Azem Adibelli\thanks{Ume\r{a} University, \emph{e-mail}: \textbf{azem.adibelli@umu.se, istvantomon@gmail.com}}
~and Istv\'an Tomon\footnotemark[1]}
\date{}

\begin{document}

\maketitle
\sloppy

\begin{abstract}
We prove that hypergraphs defined by low-degree polynomial inequalities contain large homogeneous subsets. Formally, let $H$ be an $r$-uniform hypergraph on $N$ vertices that is semialgebraic of constant description complexity, and each defining polynomial has degree at most $D$. Then $H$ contains a clique or an independent set of size $n$, where $N\leq \tw_{3D^3}(n)$.    
\end{abstract}

\section{Introduction}

Ramsey's theorem is a fundamental result in combinatorics and logic, stating that for every $n$ there exists a smallest number $N=R_r(n)$ such that every $r$-uniform $N$-vertex hypergraph contains a clique or an independent set of size $n$. It is a central topic in extremal combinatorics to understand the growth rate of $R_r(n)$, and to study how this function behaves in restricted hypergraph classes.

The original proof of Ramsey \cite{Ramsey} gave extremely poor bounds on $R_r(n)$. But soon after, Erd\H{o}s and Szekeres \cite{ESz35} and Erd\H{o}s \cite{E47} determined the right order in the case of graphs: $R_2(n)=2^{\Theta(n)}$. Finding the optimal constant factor is a notorious open problem, with several exciting recent developments \cite{CGMS,MSX}, see the survey of Morris \cite{Morris} for further discussion. For $r\geq 3$, Erd\H{o}s and Rado \cite{ER52} and Erd\H{o}s, Hajnal, and Rado \cite{EHR65} proved that $$\tw_{r-1}(\Omega(n^2))<R_r(n)<\tw_r(O(n)),$$
where the tower function $\tw_k(n)$ is defined recursively by $\tw_1(x):=x$ and $\tw_{k}(x):=2^{\tw_{k-1}(x)}$. Determining the correct height of the tower remains open, but multicolor versions of the problem indicate that the upper bound might be closer to the truth. For more recent developments on hypergraph Ramsey numbers, we refer the reader to \cite{CFS10}.

These results paint a clear picture: while we are guaranteed homogeneous sets (i.e., clique or independent set) of increasing size, there are hypergraphs that only contain extremely small such sets. This limits the applications of general Ramsey results, where one is interested in the size of homogeneous subsets of highly structured hypergraphs. Typical examples of such applications arise in geometry. For instance, the Erd\H{o}s-Szekeres theorem \cite{ESz35} states that every sequence of $N$ real numbers $a_1,\dots,a_N$ contains a monotone subsequence of length at least $\sqrt{N}$. Observe that a weaker bound of the form $\Omega(\log N)$ follows from the result $R_2(n)=2^{\Theta(n)}$:  consider the graph $G_{\text{mon}}$ on vertex set $[N]$ in which $i<j$ are joined by an edge if $a_i<a_j$, and note that a homogeneous subset in this graph is a monotone subsequence. However, the special structure of this graph ensures much larger homogeneous sets. Another classical example is the Happy Ending problem \cite{ESz35}, asking for the smallest number $N=K(n)$ such that any set of $N$ points in the plane contains $n$ points in convex position. It is a simple exercise to show that $K(n)\leq R_3(n)$, which follows by considering the $N$-vertex 3-uniform hypergraph $H_{\text{cup}}$, whose edges correspond to so called \emph{cups}. This gives a double exponential upper bound on $K(n)$, while it is now known \cite{Suk} that $K(n)=2^{n+o(n)}$. 

This motivates the question: for which natural hypergraph families can the tower-type bound $R_r(n)\leq \tw_r(O(n))$ be significantly improved? \emph{Semialgebraic hypergraphs} provide a unifying framework capable of capturing a broad spectrum of geometrically defined structures. We define an $r$-uniform hypergraph $H$ to be semialgebraic of description complexity $(d,D,m)$, if its vertices are points in $\mathbb{R}^d$ and its edge relation is determined by a Boolean combination of $m$ polynomial inequalities of degree at most $D$ (a formal definition is presented in the Preliminaries).  For example, the previously discussed graph $G_{\text{mon}}$ is semialgebraic of complexity $(1,1,1)$, while $H_{\text{cup}}$ is semialgebraic of complexity $(2,2,1)$.  Under the assumption of fixed description complexity, many problems that are hard for general hypergraphs become more tractable.

Let $R_r^{d,D,m}(n)$ denote the smallest $N$ such that any $r$-uniform $N$-vertex semialgebraic hypergraph of complexity $(d,D,m)$ contains a clique or an independent set of size $n$. Clearly, $R_r^{d,D,m}(n)\leq R_r(n)$. In the case of graphs, the function $R_2^{d,D,m}(n)$ was first studied in the foundational paper of Alon, Pach, Pinchasi, Radoi\v ci\'c and Sharir \cite{APPRS05}, where they proved that $R_2^{d,D,m}(n)\leq n^{O(1)}$. Here and later, the constant hidden by $O(.)$ and $\Omega(.)$ might depend on $r,d,D,m$, but no other parameter. Recently, sharp bounds on the exponent were proved by Tidor and Yu \cite{TY}. Note that these represent an exponential improvement compared to the trivial bound $R_2^{d,D,m}\leq R_2(n)=2^{\Theta(n)}$. By adapting the inductive approach of Erd\H{o}s and Rado \cite{ER52}, Conlon, Fox, Pach, Sudakov, and Suk \cite{CFPSS14} demonstrated that this improvement extends to the hypergraph case as well: $$R_r^{d,D,m}(n)\leq \tw_{r-1}(n^{O(1)}).$$ More strikingly, they also showed that this bound is tight: for every uniformity $r\geq 2$, there exist parameters $(d,D,m)$ such that $R_r^{d,D,m}(n)\geq \tw_{r-1}(\Omega(n))$. Consequently, the improvement provided by semialgebraic structure is only a single exponential, leaving the fundamental tower-type dependence caused by the uniformity $r$ intact. A natural direction to circumvent this is to consider how the specific parameters of the description complexity influence the growth rate of $R_r^{d,D,m}(n)$.

A beautiful result in this direction, due to Bukh and Matou\v{s}ek \cite{BM12}, establishes that $$R_r^{1,D,m}(n)\leq 2^{2^{O(n)}}$$ for some $c=c(r,D,m)$. That is, $N$-vertex semialgebraic hypergraphs of constant complexity defined over the 1-dimensional real space always contain homogeneous sets of size $\Omega(\log \log N)$, independent of the uniformity. This bound is also tight \cite{BM12,CFPSS14}. It remains an intriguing open problem whether similar phenomena holds for $d\geq 2$; specifically, whether there exists $t=t(d)$ such that $R^{d,D,m}_r(n)\leq \tw_t(O(n))$. Toward a lower bound, Eli\'a\v{s}, Matou\v{s}ek, Rold\'an-Pensado, and Safernov\'a \cite{EMRS} provided a construction showing $R_{d+3}^{d,D,m}(n)\geq \tw_{d+2}(\Omega(n))$.

Another direction is to bound the parameter $D$ denoting the degree of defining polynomials. In recent years, hypergraphs defined by linear inequalities, or equivalently, semialgebraic hypergraphs of complexity $(d,1,m)$, have received substantial interest. This is largely due to a result of Basit, Chernikov, Starchenko, Tao, and Tran \cite{BCSTT21}, which demonstrated that such hypergraphs exhibit interesting behavior with respect to Zarankiewicz-type problems, a topic which has since seen significant development \cite{CH,CKS,HMST}. The authors also coined the term \emph{semilinear hypergraph} for this family. Ramsey-type problems for semilinear graphs were studied by Tomon \cite{semilin}, while for semilinear hypergraphs, Jin and Tomon \cite{JinTomon} proved the optimal bound 
$$R^{d,1,m}_r(n)\leq 2^{n^{O(1)}}.$$ Thus, restricting the degree to $D=1$ has a similar effect on the Ramsey number as restricting the dimension to $d=1$. The main result of our paper shows that for any fixed degree $D$, the Ramsey number $R^{d,D,m}_r(n)$ can be bounded by $\tw_{t}(O(n))$, where the tower height $t=t(D)$ only depends on $D$. This provides a substantial improvement over the $R_r^{d,D,m}(n)\leq \tw_{r-1}(O(n))$ bound for hypergraphs defined by low degree polynomials. 

\begin{theorem}\label{thm:main}
     Let $H$ be an $r$-uniform semialgebraic hypergraph of description complexity $(d,D,m)$ on $N$ vertices, where $N\gg d,D,m,r$. Then $H$ contains a clique or an independent set of size $n$, where 
     $$N\leq \tw_{3D^3}(n).$$
\end{theorem}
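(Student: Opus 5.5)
We reduce to a bounded number of "rounds" of Ramsey-type cleaning, where the number of rounds depends only on $D$ and not on $r$. The guiding idea is the Jin--Tomon argument for semilinear hypergraphs. When all defining polynomials are linear, the sign of $f(x_1,\dots,x_r)$ is determined by the signs of $1$-variable linear forms after suitable reorderings and comparisons. This is what allows a single-exponential bound. For degree $D$, each polynomial $f$ is a sum of monomials, and each monomial involves at most $D$ of the $r$ variables. So $f(x_1,\dots,x_r)=\sum_{S} g_S(x_S)$, where $S$ ranges over subsets of $[r]$ of size at most $D$ and $g_S$ is a polynomial in the coordinates of the points indexed by $S$.

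We induct on $D$, or more precisely on a potential combining the degree with the "interaction order" $k\le D$. First we reduce to $m=1$ and to relations of the form $f>0$, using a multicolor version of the argument. Then we look for a large subset $V'$ and an ordering of it on which the sign of $f(x_{i_1},\dots,x_{i_r})$ for $i_1<\dots<i_r$ is constant. We linearize: write $f=\sum_j c_j\, m_j(x_{S_j})$, where each $m_j$ is a product of monomials in at most $D$ points. Then we view the vector $\Phi(x_{S})=(m_j(x_S))_j$ as a new point in a higher-dimensional space. The sign of $f$ is then a linear inequality in "tuple-features", each depending on at most $D$ vertices.

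In the key step we apply Ramsey-type cleaning to the $D$-uniform "feature hypergraphs". For each coordinate of the features and each pair of $D$-tuples we compare their values. This is a semialgebraic relation on $2D$ points, but of a special form: a difference of two degree-$\le D$ polynomials, each in at most $D$ variables. Applying the Conlon--Fox--Pach--Sudakov--Suk bound $R_k^{d,D,m}(n)\le \tw_{k-1}(n^{O(1)})$ to uniformity $k\le 2D$ costs a tower of height about $2D$. We do this once for each of the $O(D)$ degree levels, passing to a subset on which the order type of all features is "lexicographically determined". On such a set, the sign of the linear combination $\sum_j c_j m_j$ behaves like a semilinear relation, since only comparisons matter. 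We then finish with the Jin--Tomon bound $2^{n^{O(1)}}$. The total tower height is about $D$ levels times $2D$ uniformity per level, with an extra factor $D$ coming from the nested induction on the number of distinct variable-sets. This is what gives $3D^3$.

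The main obstacle is justifying that, after cleaning the low-uniformity comparison relations, the sign of the full $r$-ary sum is determined by something whose complexity is independent of $r$. A sum of many terms with mixed signs is not determined by pairwise comparisons alone. My first attempt would be a "dominance" lemma. After the cleaning, for each $D$-subset pattern, one term in the sum dominates the others: either because the feature values grow fast along the order (a tower-type gap, which itself costs one exponential per level to enforce), or because the terms are equal up to a fixed pattern. Then the sign equals the sign of the leading term. If the leading term involves fewer than $D$ variables or has lower degree, we recurse, and this recursion is what costs $D^2$ levels. If the dominance gaps cannot be enforced, I would instead use a polynomial-partitioning or cutting argument in the feature space, which yields polynomial rather than tower losses.
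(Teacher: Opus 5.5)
Your proposal correctly names the target: a dominance statement under which the sign of $f(x)$ equals the sign of a single term. But it gives no way to reach that statement, and the route it sketches does not work.

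First, the step ``after cleaning, the order type of the features is determined, so $\sum_j c_j m_j(x_{S_j})$ behaves like a semilinear relation and Jin--Tomon finishes'' fails for two reasons:
\begin{itemize}
\item The relative order of three or more reals does not determine the sign of a signed combination of them, for example $m_1+m_2-m_3$ with $m_1<m_2<m_3$.
\item Jin--Tomon needs the relation to be linear in the coordinates of individual vertices. Your features $m_j(x_{S_j})$ each depend on up to $D$ vertices at once.
\end{itemize}
What is needed is ratio information: each term must be exponentially separated.

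Second, Ramsey-cleaning the given terms or monomials cannot produce such separation, however many rounds you spend. The paper's example $\phi(a,b)=n^a+b$ shows why. On every subset, the values $\phi(a,b)$ with $a$ fixed are within a factor $2$ of each other: $\phi$ is numerically dominated by a function of fewer variables, and the relevant information sits in a lower-order perturbation. The same happens with genuine monomials. Take vertices $(p_v,q_v)$ with $q_v\approx 1$ and terms $p_1q_2-p_1q_3$: both monomials are $\approx p_1$ and cancel. So ``if the leading term involves fewer variables, recurse'' cannot be run at the level of the monomials of $f$.

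The missing idea is to change each $\phi_I$ modulo functions of fewer variables. Write $\phi\sim\sigma$ if $\phi-\sigma=\sum_J\rho_J(a_J)$ over $(D-1)$-subtuples. Then show that some $\sigma\sim\phi$ is exponentially separated on a subset $S$ with $N\le\tw_K(|S|^{O(1)})$. The paper does this with the hypercubic evaluation $\psi_\phi$, which is invariant under $\sim$:
\begin{itemize}
\item Ramsey colorings of $(2D+1-\ell)$-tuples make each sequence $b\mapsto\psi^{(\ell+1)}(a,b,c)$ shifted-exponential.
\item The shifts are reassembled into a correction $\theta$ via Lemma~\ref{lemma:phi_construction}.
\item Keeping every other element gives the weak separation property.
\item Lemma~\ref{lemma:rainbow} then gives full exponential separation.
\end{itemize}
The corrections $\rho_J$ are absorbed into the level-$(D-1)$ functions, and the process repeats down to constants. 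Only then does one $2D$-uniform coloring fix a dominant index and give homogeneity; it records the order of the $|\sigma_{f,I}(a_J)|$, with a bad color when two lie within a factor $2r^D$.

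Your tower-height count is not derived from any argument, and the fallback to ``polynomial partitioning or cutting'' is not an argument. As it stands, the proposal does not prove the theorem.
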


In Subsection \ref{sect:maintheorem}, we deduce \Cref{thm:main} from a more general result, which applies to a larger class of hypergraphs. The tower height $O(D^3)$ in \Cref{thm:main} is likely far from optimal, but it cannot be replaced by a value smaller than $D-1$. This is due to the fact that any $r$-uniform semialgebraic hypergraph of description complexity $(d,D,m)$ also has description complexity $(d',r,m)$ for some $d'$ depending only on $r,d,D$. This follows from a standard application of Veronese mappings. Since it is known that there exist constructions achieving the bound $R^{d,D,m}(n)\geq \tw_{r-1}(\Omega(n))$, setting $D=r$ also shows that $R^{d',D,m}_D(n)\geq \tw_{D-1}(\Omega(n))$ with suitable~$d'$ and $m$. 

\medskip

\noindent
\textbf{Organization.} In the next section, we present our notation and a few simple auxiliary results. In Section \ref{sect:outline}, we give a rough outline of the proof of  \Cref{thm:main}. Then, in Section \ref{sect:D=2}, we present the proof of the $D=2$ case, in order to illustrate our key new ideas in a less technical manner. The detailed proof of  \Cref{thm:main} is then presented in Section \ref{sect:proof}.

\section{Preliminaries}

First, we introduce our terminology. We use mostly standard graph and set theoretic notation. We write $[n]:=\{1,\dots,n\}$. For ease of notation, we declare certain variables as constants, and then the $O(.)$ and $\Omega(.)$ notations hide factors that possibly depend on these constants, but no other variables. Moreover, if we say a quantity is sufficiently large, it is with respect to constants.

The \emph{tower function} $\tw_k(x)$ is defined as $\tw_1(x):=x$ and $\tw_k(x):=2^{\tw_{k-1}(x)}$ for $k\geq 2$. We highlight the simple identity $\tw_{k}(\tw_{\ell}(x))=\tw_{k+\ell-1}(x)$.

An \emph{ordered set} refers to any set with a complete ordering on its elements. We denote this ordering by simply $<$. Given an ordered set $A$ and integer $k$, we write (unconventionally)
$$A^{(k)}:=\{(a_1,\dots,a_k)\in A^k: a_1<\dots<a_k\}.$$
 Also, write $$A^{(\leq k)}:=\bigcup_{0\leq \ell\leq k}A^{(\ell)},$$ where we use the convention that $A^{(0)}=\{()\}$. Given $a=(a_1,\dots,a_k)\in A^k$ and $I\subset [k]$, 
$$a_I:=((a_i)_{i\in I}).$$
With slight abuse of notation, if $a\in A^{k}$ and $b\in A^{m}$, we write $(a,b)$ for $(a_1,\dots,a_k,b_1,\dots,b_m)\in A^{k+m}$

\subsection{Semialgebraic hypergraphs}
 We give a formal definition of semialgebraic hypergraphs of description complexity $(d,D,m)$, where $d$ refers to the dimension of the ambient space, $D$ is an upper bound on the (total) degree of the defining polynomials, and $m$ is an upper bound on the number of defining polynomials. 

\begin{definition}[Semialgebraic hypergraph]
Let $V\subset \mathbb{R}^d$ be an ordered set. Let $f_1,\dots,f_m: (\mathbb{R}^d)^r\rightarrow \mathbb{R}$ be polynomials of degree at most $D$, and let $\Gamma: \{\texttt{false},\texttt{true}\}^{m}\rightarrow \{\texttt{false},\texttt{true}\}$ be a Boolean formula. Define the $r$-uniform hypergraph $H$ on vertex set $V$ in which $x\in V^{(r)}$ forms an edge if 
$$\Gamma\left(f_1(x)\leq 0,\dots,f_m(x)\leq 0\right)=\texttt{true}.$$
Then $H$, and any hypergraph isomorphic to $H$, is a \textbf{semialgebraic hypergraph of description complexity} $(d,D,m)$. 
\end{definition}

We remark that several closely related definitions of semialgebraic hypergraphs appear in the literature. For instance, Conlon, Fox, Pach, Sudakov, and Suk \cite{CFPSS14} define complexity as a pair $(d,t)$, where $t$ serves as a simultaneous upper bound for both $m$ and $D$.  In contrast,  Tidor and Yu \cite{TY} defines complexity $(d,D)$, where $D$ is the sum of the degrees of the $m$ polynomials $f_1,\dots,f_m$. Moreover, they work with unordered vertex sets $V$, but in exchange the edge relation has to be assumed symmetric.   Our  choice of $(d, D, m)$ provides a finer description of the complexity, as we are mainly interested in the degrees $D$. However, our definition does not change the core concept of semialgebraic relations.

\subsection{Exponential sequences}

A core component of the proof of our main theorem is the study of exponential subsequences, which we discuss in detail in this section.

\begin{definition}[Exponential sequence]
A sequence of real numbers  $\{a_i\}_{i=1,\dots,n}$ is 
\begin{itemize}
    \item  \textbf{exponential} if $0\leq  2a_{i-1}\leq a_i$ for $i=2,\dots,n$;
    \item  \textbf{weakly-exponential} of type
    \begin{description}
        \item[\hspace{10pt} $\typeone$] if $\{a_i\}_{i=1,\dots,n}$ is exponential,
        \item[\hspace{10pt} $\typetwo$] if $\{a_i\}_{i=n,\dots,1}$ is exponential,
        \item[\hspace{10pt} $\typethree$] if $\{-a_i\}_{i=1,\dots,n}$ is exponential,
        \item[\hspace{10pt} $\typefour$] if $\{-a_i\}_{i=n,\dots,1}$ is exponential;
     \end{description}
    \item  \textbf{shifted-exponential} of type $\tau\in \{\typeone,\typetwo,\typethree,\typefour\}$ if there exists $t\in \mathbb{R}$ such that $\{a_i-t\}_{i=1,\dots,n}$ is weakly-exponential of type $\tau$.
\end{itemize}
\end{definition}

The strength of shifted-exponential sequences comes from the fact that any sequence of $n$ real numbers contains a shifted-exponential subsequence of length $\Omega(\log n)$, and this bound is tight \cite{JinTomon}. This can be viewed as an analogue of the celebrated Erd\H{o}s-Szekeres theorem \cite{ESz35}, which ensures that any sequence of length $n$ contains a monotone subsequence of length $\lceil \sqrt{n}\rceil.$ An important idea, which also played a key role in the work of Jin and Tomon \cite{JinTomon}, is that shifted-exponential subsequences can be found with the help of coloring triplets. This motivates the following definition.  

\begin{definition}
    A triple of real numbers $(a,b,c)$ has type
    \begin{description}
        \item[\hspace{10pt} $\typeone$] if $0\leq b-a\leq c-b$,
        \item[\hspace{10pt} $\typetwo$] if $b-a\geq c-b\geq 0$,
        \item[\hspace{10pt} $\typethree$] if $0\leq a-b\leq b-c$,
        \item[\hspace{10pt} $\typefour$] if $a-b\geq b-c\geq 0$,
        \item[\hspace{10pt} $\typezero$] if $a,b,c$ is not monotone.
    \end{description}
\end{definition}

\noindent
In case a triple or sequence has more than one type, we assign a type arbitrarily.

\begin{lemma}\label{lemma:cups_caps}
Let $\{a_i\}_{i=1,\dots,n}$ be a sequence of real numbers. If every triple $(a_i,a_j,a_k)$ for $1\leq i<j<k\leq n$ has type $\tau\in\{\typeone,\typetwo,\typethree,\typefour\}$, then $\{a_i\}_{i=1,\dots,n}$ is shifted-exponential of type $\tau$. Moreover, if every triple has type $\typezero$, then $n\leq 4$.
\end{lemma}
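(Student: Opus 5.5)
By symmetry, it suffices to treat type $\typeone$ for the first claim. If every triple is $\typetwo$, reversing the index order makes every triple $\typeone$. If every triple is $\typethree$, negating all terms does the same, and $\typefour$ combines both operations. The definitions of shifted-exponential types are arranged to match these operations exactly.

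So assume every triple $(a_i,a_j,a_k)$ with $i<j<k$ satisfies $0\le a_j-a_i\le a_k-a_j$. We may assume $n\ge 3$; the cases $n\le 2$ are handled by a trivial choice of $t$. Applying this to consecutive triples shows the sequence is nondecreasing. The natural shift is $t:=2a_1-a_2$, so that $a_1-t=a_2-a_1$. We must check three things.
\begin{enumerate}
\item $a_1-t=a_2-a_1\ge 0$, which holds by monotonicity.
\item $2(a_1-t)\le a_2-t$. This is equivalent to $a_1-t\le a_2-a_1$, which holds with equality.
\item For $i\ge 3$, $2(a_{i-1}-t)\le a_i-t$, i.e.\ $a_{i-1}-t\le a_i-a_{i-1}$. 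Since $a_{i-1}-t=(a_{i-1}-a_1)+(a_2-a_1)$, this is the claim $(a_{i-1}-a_1)+(a_2-a_1)\le a_i-a_{i-1}$.
\end{enumerate}
For the third point we use the triple $(a_1,a_{i-1},a_i)$, which gives $a_{i-1}-a_1\le a_i-a_{i-1}$. However, we need the stronger bound with the extra summand $a_2-a_1$. This is the main obstacle: a single triple is too weak.

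The fix is a different shift. Take $t:=a_1-(a_2-a_1)$ only when $n\ge3$ and absorb via triples $(a_1,a_2,a_{i})$? That also gives only single gaps. Instead, shift by $t:=a_1$ and drop the first term? We cannot drop terms. So take $t$ slightly smaller: choose $t:=a_1-\delta$ with $\delta\ge0$ small, for instance $\delta=0$. Then we need $a_{i-1}-a_1\le a_i-a_{i-1}$ for all $i\ge 2$, and the triple $(a_1,a_{i-1},a_i)$ gives exactly this for $i\ge3$. For $i=2$ the condition $2(a_1-t)=0\le a_2-a_1$ holds. So $t=a_1$ works directly and the obstacle disappears. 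With this shift $a_1-t=0$, which is allowed since the definition of exponential only requires $0\le 2a_{i-1}\le a_i$.

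For the second claim, suppose every triple has type $\typezero$, i.e.\ is non-monotone, and $n\ge5$. By the Erd\H{o}s--Szekeres theorem, any $5\ge (3-1)^2+1$ terms contain a monotone subsequence of length $3$. This triple is monotone, a contradiction. Hence $n\le 4$. The only care needed is that non-strict monotonicity matches the meaning of "monotone" in type $\typezero$, which it does, since any monotone triple has one of the types $\typeone$--$\typefour$.
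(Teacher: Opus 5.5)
Your core argument is the paper's: for type $\typeone$ you end up (after an unnecessary detour through $t=2a_1-a_2$) shifting by $t=a_1$ and using the triples $(a_1,a_{i-1},a_i)$. The bound $n\le 4$ likewise comes from five numbers always containing a monotone triple.

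The step that fails is your symmetry reduction. Reversing the index order turns a triple $(a,b,c)$ into $(c,b,a)$, so it exchanges $\typeone\leftrightarrow\typefour$ and $\typetwo\leftrightarrow\typethree$. Negation exchanges $\typeone\leftrightarrow\typethree$ and $\typetwo\leftrightarrow\typefour$. Hence an all-$\typetwo$ sequence needs both operations to become all-$\typeone$, and an all-$\typefour$ sequence needs only reversal, which is the opposite of what you wrote.

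On the sequence side, reversal corresponds to weakly-exponential type $\typetwo$, and reversal plus negation to $\typefour$. So the correct reduction shows that an all-$\typetwo$ sequence is shifted-exponential of type $\typefour$, and an all-$\typefour$ sequence is shifted-exponential of type $\typetwo$. Your claim that the definitions ``are arranged to match these operations exactly'' is therefore false. In fact the statement with the same label $\tau$ cannot be proved for $\tau\in\{\typetwo,\typefour\}$. The sequence $(-\tfrac12,-\tfrac14,-\tfrac18)$ has its only triple of type $\typetwo$ (and of no other type). Yet requiring $\{a_i-t\}_{i=3,2,1}$ to be exponential would force both $t\le a_3=-\tfrac18$ and $t\ge 2a_2-a_1=0$.

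What you can actually prove is the lemma with $\typetwo$ and $\typefour$ swapped in the conclusion. This is a labelling slip in the statement, which the paper's ``the other three cases can be handled similarly'' also passes over. It is harmless for the later applications, where all that matters is that the type of the shifted-exponential sequence is a fixed function of the colour. Still, a proof should state the correct correspondence rather than assert a matching that does not hold.

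A smaller point of the same kind concerns $n=2$. There the hypothesis is vacuous, and no shift makes a decreasing pair shifted-exponential of type $\typeone$. So ``the cases $n\le 2$ are handled by a trivial choice of $t$'' is wrong, and your monotonicity step genuinely needs $n\ge 3$. The paper silently ignores this degenerate case too.
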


\begin{proof}
 It is an easy exercise to show that any sequence of 5 numbers contains a subsequence of size 3 that is monotone, so a sequence can only be type $\typezero$ if it has at most 4 elements.

 Now assume that $\{a_i\}_{i=1,\dots,n}$ has a non-$\typezero$ type. We only consider type $\typeone$, the other three cases can be handled similarly. We show that $\{a_{i}-a_1\}_{i=1,\dots,n}$ is exponential, which then implies that $\{a_i\}_{i=1,\dots,n}$ is shifted-exponential of type $\typeone$. Indeed, $(a_1,a_i,a_{i+1})$ is of type $\typeone$, which means that $0\leq a_i-a_1\leq a_{i+1}-a_{i}$, which then implies $0\leq 2(a_i-a_1)\leq a_{i+1}-a_1$.  
\end{proof}

In general, given a function  $\phi:A\rightarrow\mathbb{R}$ (or sequence of real numbers), a \emph{shifting} of this function refers to any function of the form  $\phi-t$ for some $t\in \mathbb{R}$. We also define a relaxation of exponential sequences to general real functions. This notion will be used in the case of functions acting on $D$-tuples, i.e. $\phi:A^{(D)}\rightarrow\mathbb{R}$.

\begin{definition}
    A function $\phi:B\rightarrow \mathbb{R}$ is \textbf{exponentially separated} if $\phi(b)$ has the same sign for every $b\in B$, and for any two distinct $b\neq b'$, we have 
    $$|\phi(b)|\geq 2|\phi(b')|\mbox{ or }|\phi(b')|\geq 2|\phi(b)|.$$
    We define exponentially separated sequences analogously.
\end{definition}

Equivalently, $\phi:B\rightarrow \mathbb{R}$ is exponentially separated if and only if there is an enumeration $b_1,\dots,b_n$ of the elements of $B$ such that $\{\phi(b_i)\}_{i=1,\dots,n}$ is exponential (or weakly-exponential).

\subsection{Extremal combinatorics}

We present some simple or well known auxiliary results about hypergraphs. First, we state the quantitative version of Ramsey's theorem for colorings with constant number of colors, proved originally by Erd\H{o}s and Rado \cite{ER52}.

\begin{theorem}\label{thm:Ramsey}
    Let $r,s$ be positive integer constants. Let $H$ be the complete $r$-uniform hypergraph on $N$ vertices, whose edges are colored with $s$ colors. Then $H$ contains a monochromatic clique of size $n$, where $N\leq \tw_{r}(O(n))$.
\end{theorem}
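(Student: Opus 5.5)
The statement to prove is \Cref{thm:Ramsey}, the Erd\H{o}s--Rado bound $R_r(n;s)\le \tw_r(O(n))$ for $s$ colors and constant $r,s$. I will argue by induction on $r$, using the stepping-down argument of Erd\H{o}s and Rado.

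\emph{Base case $r=1$.} A coloring of $N$ vertices with $s$ colors has a color class of size at least $N/s$. Hence $N=sn=O(n)=\tw_1(O(n))$ suffices.

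\emph{Case $r=2$.} This is the standard multicolor Erd\H{o}s--Szekeres argument.
\begin{itemize}
\item Pick a vertex $v_1$. Partition the remaining vertices by the color of their edge to $v_1$, and keep the largest class.
\item Repeat inside that class. After $sn$ steps we obtain vertices $v_1,\dots,v_{sn}$, where the color of the edge $v_iv_j$ for $i<j$ depends only on $i$.
\item Each step loses at most a factor $s$, so $N\ge s^{sn+1}=2^{O(n)}$ is enough.
\item Color each $v_i$ by the color it sends forward. By pigeonhole, $n$ of the $v_i$ share a color, and they form a monochromatic clique.
\end{itemize}
This gives $\tw_2(O(n))$.

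\emph{Inductive step $r\ge 3$.} Assume the bound for $r-1$. Greedily build vertices $v_1,v_2,\dots,v_M$ together with a shrinking candidate set $S$, maintaining the following invariant:
\begin{itemize}
\item for every $(r-1)$-subset $e$ of the chosen vertices,
\item and every candidate $w\in S$ (together with all later chosen vertices),
\item the color of $e\cup\{w\}$ depends only on $e$.
\end{itemize}

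The construction works as follows. When $v_{k+1}$ is added, we take the new $(r-1)$-sets $e$ containing $v_{k+1}$ and chosen vertices; there are at most $\binom{k}{r-2}$ of them. We then partition the candidates according to the vector of colors $(\chi(e\cup\{w\}))_e$, which has at most $s^{\binom{k}{r-2}}$ possible values, and keep the largest class.

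After $M$ steps we have an induced $s$-coloring of the $(r-1)$-subsets of $\{v_1,\dots,v_M\}$, namely the color of $e$ together with any later vertex. By induction, there is a monochromatic $(r-1)$-uniform clique of size $n$ once $M\ge \tw_{r-1}(Cn)$. That clique is monochromatic in the original $r$-uniform coloring.

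The counting is the step needing care. The total shrinkage is a factor
$$\prod_{k<M}s^{\binom{k}{r-2}}\le s^{M^{r-1}}.$$
So it suffices to take
$$N\ge s^{M^{r-1}}+M = 2^{O(M^{r-1})} = 2^{O(\tw_{r-1}(Cn)^{r-1})}.$$

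This is the main obstacle: we need $2^{\tw_{r-1}(Cn)^{r-1}}\le \tw_r(C'n)$, i.e.\ we must absorb the polynomial power inside the tower.
\begin{itemize}
\item For $r-1\ge 2$, this holds because $\tw_{r-1}(Cn)^{r-1}=2^{(r-1)\tw_{r-2}(Cn)}\le \tw_{r-1}(C'n)$. This uses the fact that a constant multiplicative factor at the top of a tower of height at least $2$ is absorbed by increasing $C$, since $(r-1)\tw_{r-2}(Cn)\le \tw_{r-2}(C'n)$ for $r-2\ge 1$.
\item For $r=3$, the induction uses the $r=2$ case with $M=2^{O(n)}$. Then $M^2=2^{O(n)}$ and $N=2^{2^{O(n)}}=\tw_3(O(n))$.
\end{itemize}

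Since $r$ and $s$ are constants, the constants compound only finitely many times, and the resulting bound is $N\le \tw_r(O(n))$.
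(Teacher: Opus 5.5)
Your proof is correct: it is the classical Erd\H{o}s--Rado stepping-down induction. You greedily build an end-homogeneous sequence, apply the $(r-1)$-uniform case to the induced coloring, and absorb the bound $N=2^{O(M^{r-1})}$ into one extra tower level; your bookkeeping of the constants inside the tower is sound. The paper does not prove this theorem itself but cites Erd\H{o}s and Rado \cite{ER52}, and their argument is the one you reproduce.
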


We also make use of the following lemma, which can be thought of as an extension of the so called anti-Ramsey theorem of Babai \cite{Babai}, see also \cite{AJMP}.

\begin{lemma}\label{lemma:rainbow}
Let $r,s,q$ be positive integer constants. Let $H$ be the complete $r$-uniform hypergraph on $N$ vertices, and let $\gamma:E(H)\rightarrow \mathbb{R}^s$ satisfy the following property. For every edge $f$, there are at most $q$ edges $f'$ such that $|f\cap f'|=r-1$ and $|\gamma(f)_i-\gamma(f')_i|<1$ for some $i\in [s]$. Then there exists a complete subhypergraph $H'$ on $\Omega(N^{1/(2r-1)})$ vertices such that $|\gamma(f)_i-\gamma(f')_i|\geq 1$ for every distinct pair of edges $f,f'\in E(H')$ and $i\in [s]$. 
\end{lemma}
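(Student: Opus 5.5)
We pass to a random vertex subset, keep only the few remaining \emph{conflicts} (pairs of edges in a common $(r-1)$-set that violate the separation), and then delete one vertex from each conflict, in the spirit of the standard probabilistic proof of Babai's anti-Ramsey theorem. Call an unordered pair of distinct edges $\{f,f'\}$ \emph{bad} if $|\gamma(f)_i-\gamma(f')_i|<1$ for some $i\in[s]$. We must find a large vertex set $U$ that spans no bad pair. Bad pairs come in two kinds.

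\textbf{Bad pairs with $|f\cap f'|=r-1$.} These span $r+1$ vertices. By hypothesis, each edge lies in at most $q$ such pairs, so there are at most $q\binom{N}{r}=O(N^r)$ of them.

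\textbf{Bad pairs with $|f\cap f'|<r-1$.} These span between $r+2$ and $2r$ vertices. There is no direct hypothesis on them, so we would bound them through the hypothesis on the first kind. Fix $f,f'$ with $|f\setminus f'|=k\geq 2$ and a coordinate $i$. Consider a chain of edges $f=g_0,g_1,\dots,g_k=f'$, where each step swaps one vertex of $f\setminus f'$ for one of $f'\setminus f$. Consecutive edges share $r-1$ vertices, but the triangle inequality does not force any step to be bad, so this route fails as stated.

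Instead, for the counting we use a pigeonhole-type argument within each coordinate. For a fixed $(r-1)$-set $S$, the values $\gamma(S\cup\{v\})_i$ over $v\notin S$ are numbers in which each one is within distance $<1$ of at most $q$ others. Any interval of length $1$ therefore contains at most $q+1$ of them.

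A general pair $f,f'$ only shares $r-k$ vertices, so this alone does not bound it. The fix is to select $U$ by a \emph{greedy/random process ordered so that new edges are compared only with edges differing in one vertex}. Concretely, we build $U=\{u_1,u_2,\dots\}$ greedily: at each step choose $u_{t+1}$ among the remaining vertices so that every new edge $f\ni u_{t+1}$ avoids being close, in every coordinate, to every existing edge $f'\subset\{u_1,\dots,u_t\}\cup\{u_{t+1}\}$.

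\textbf{Counting forbidden candidates.} Take a new edge $f=T\cup\{v\}$ with $T\subset U_t$ of size $r-1$, and an existing edge $f'$, of which there are $O(t^r)$ choices together with $T$ and $i$. We bound how many $v$ are forbidden.

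Suppose two forbidden candidates $v,w$ satisfy $|\gamma(T\cup\{v\})_i-\gamma(f')_i|<1$ and likewise for $w$. Then $|\gamma(T\cup\{v\})_i-\gamma(T\cup\{w\})_i|<2$. The sets $T\cup\{v\}$ and $T\cup\{w\}$ share the $(r-1)$-set $T$. By the interval-packing remark above, applied with an interval of length $2$, which splits into two intervals of length $1$, at most $2(q+1)$ candidates $v$ are forbidden for each triple $(T,f',i)$.

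So at step $t$ at most $O(t^{r-1}\cdot t^r)=O(t^{2r-1})$ vertices are excluded. The greedy process continues while $O(t^{2r-1})<N-t$, which gives $|U|=\Omega(N^{1/(2r-1)})$ and matches the stated exponent.

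Pairs where both edges are old were handled at earlier steps. A pair of two new edges, both containing $u_{t+1}$, is also covered: take $f'$ to be one of them, which is allowed since $f'$ ranges over subsets of $U_t\cup\{u_{t+1}\}$. In that case we need the bound to hold with $f'$ also depending on $v$. For that subcase we instead use $|f\cap f'|$: if the two new edges are $T\cup\{v\}$ and $T'\cup\{v\}$, the hypothesis applies only when $|T\cap T'|=r-2$.

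\textbf{Main obstacle.} The hard step is precisely this subcase of two new edges through $u_{t+1}$ that differ in more than one old vertex. For it we would first try a preliminary Ramsey/Erdős–Szekeres-type reduction. If that fails, we would fall back on the weaker random deletion over $2r$-sets, which still yields a polynomial bound $N^{c}$. The greedy count above is what produces the exponent $1/(2r-1)$.
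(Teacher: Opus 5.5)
Your proposal has a genuine gap. You flag it yourself but do not close it: the greedy step never controls conflicts between two \emph{new} edges $T\cup\{v\}$ and $T'\cup\{v\}$. This cannot be repaired inside the greedy framework. The hypothesis gives no bound on how many candidates $v$ a fixed pair $(T,T')$ forbids, even when $|T\cap T'|=r-2$.

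For example, take $r=2$, $s=1$ and vertex set $\{a,b\}\cup[M]$. Put $\gamma(\{a,v\})=10v$ and $\gamma(\{b,v\})=10v+1/2$ for $v\in[M]$, and give all remaining edges pairwise far-apart values far from these. Every edge then has at most one close neighbour sharing a vertex, so $q=1$. But if the greedy picks $u_1=a$, $u_2=b$ (which is allowed), every further $v$ creates the conflicting pair $\{a,v\},\{b,v\}$, and the process halts at $|U|=2$. A worst-case per-step count cannot see that such configurations are rare. Your fallback, deletion over $2r$-sets with an unspecified exponent, does not give the stated $\Omega(N^{1/(2r-1)})$.

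The missing observation is that your interval-packing bound already makes your \emph{first} plan, random sampling plus deletion, work. You dropped that plan too early when you wrote that a pair sharing fewer than $r-1$ vertices is not bounded by the packing remark. In your greedy count you proved the following: for a fixed edge $f$, a fixed $(r-1)$-set $h$ with no relation to $f$, and a coordinate $i$, at most $2(q+1)$ vertices $w$ satisfy $|\gamma(h\cup\{w\})_i-\gamma(f)_i|<1$. Apply this globally.

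Given $f$ and $k\in\{0,\dots,r-1\}$, a conflicting $f'$ with $|f\cap f'|=k$ is determined by three choices:
\begin{itemize}
\item $g=f\cap f'$, with at most $2^r$ choices;
\item an $(r-1)$-set $h$ with $g\subseteq h\subset f'$, with at most $N^{r-1-k}$ choices;
\item the last vertex of $f'$, with at most $2(q+1)s$ choices.
\end{itemize}
So for every $k$, conflicting pairs spanning $2r-k$ vertices number $O(N^{2r-k-1})$, one power of $N$ below the trivial count.

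Now sample each vertex with probability $p=cN^{-(2r-2)/(2r-1)}$. The expected number of surviving conflicts is $O(\sum_k p^{2r-k}N^{2r-k-1})=O(\sum_k (pN)^{2r-k}/N)$. Since $(pN)^{2r-1}=c^{2r-1}N$ and $pN\geq 1$, each term is at most $c^{2r-1}pN$, so the total is at most $pN/2$ for small $c$. Deleting one vertex from each surviving conflict leaves $\Omega(N^{1/(2r-1)})$ vertices in expectation. This is exactly the paper's proof, and it handles conflicts through a common new vertex with no special treatment.
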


\begin{proof}
 Let $$S=\{(f,f')\in E(H): f\neq f',\exists i\in [s], |\gamma(f)_i-\gamma(f')_i|<1\},$$
so that $S$ is the set of ''conflicts''. Fix $f\in E(H)$, then 
$$\#\{f': (f,f')\in S,  |f\cap f'|=k\}\leq 2^rqs  N^{r-k-1}.$$
Indeed, for each $g\subset f$ of size $k$, the number of $f'$ such that $(f,f')\in S$ and $g=f\cap f'$ is at most $qsN^{r-k-1}$. This follows from the observation that the number of $(r-1)$-tuples $g$ is contained in is at most $N^{r-k-1}$, and each $(r-1)$-tuple can be contained in at most $qs$ such edges $f'$. 

 The rest of the proof is a standard application of the probabilistic deletion method. Let $p=cN^{-\frac{2r-2}{2r-1}}$ with $c=1/(2^{r+1}rqs)$, and sample the vertices of $H$ with probability $p$, with $U$ denoting the set of sampled vertices.   Let $X$ be the number of pairs $(f,f')\in S$ such that $f,f'\subset U$. Then
\begin{align*}
\mathbb{E}(X)&=\sum_{(f,f')\in S} p^{|f\cup f'|}=\sum_{k=0}^{r-1}p^{2r-k}\cdot \#\{(f,f')\in S: |f\cap f'|=k\}\\
&\leq \sum_{k=0}^{r-1}p^{2r-k}N^{r}\cdot (2^rqs N^{r-k-1})= 2^rqs\sum_{k=0}^{r-1}c^{2r-k}N^{\frac{2r-k}{2r-1}-1}<2^r rqsc^{r+1}N^{\frac{1}{2r-1}}<\frac{c}{2}N^{\frac{1}{2r-1}}.
\end{align*}
 For each $(f,f')\in S$ such that $f,f'\subset U$, delete a vertex of $f\cup f'$ from $U$, and let $U'$ be the resulting set. Then $\mathbb{E}(|U'|)\geq \mathbb{E}(|U|-X)\geq cN^{1/(2r-1)}/2$. Fix an outcome of the sampling such that $|U'|\geq cN^{1/(2r-1)}/2$, then $H[U']$ satisfies the required properties.
\end{proof}

\section{Proof outline}\label{sect:outline}

We give a rough outline of the proof of \Cref{thm:main}. First, we discuss the proof of Jin and Tomon \cite{JinTomon} of the $D=1$ case, then we present our new key ideas by considering the case $D=2$. For further simplification, we assume that the number of defining polynomials is $m=1$, by noting that the analysis of the general case does not increase the complexity significantly.

\medskip

First, let $H$ be an $N$-vertex $r$-uniform semialgebraic hypergraph of description complexity $(d,1,1)$. Then there exists a linear function $f:(\mathbb{R}^d)^r\rightarrow\mathbb{R}$ such that $x\in V(H)^{(r)}$ is an edge if and only if $f(x)\leq 0$. Using that $f$ is linear, we can write $f(x)=\sum_{i=1}^{r}\phi_i(x_i)$
with suitable functions $\phi_i:\mathbb{R}^d\rightarrow\mathbb{R}$, $i\in [r]$. The key idea is to consider the sequences $\{\phi_i(x_i)\}_{x\in V(H)}$ for $i\in [r]$, and find a set $S\subset V(H)$  of size $(\log N)^{\Omega(1)}$ such that all $r$ sequences $\{\phi_i(x_i)\}_{x\in S}$ are shifted-exponential. After shifting, we get functions $\sigma_i:S\rightarrow \mathbb{R}$ and a constant $C$ such that $f(x)=C+\sum_{i=1}^r\sigma_i(x_i)$, and each sequence $\{\sigma_i(x_i)\}_{x\in S}$ is weakly-exponential. The advantage of this is that for a ''generic'' $r$-tuple $x\in S^{(r)}$, the sum $C+\sum_{i=1}^r\sigma_i(x_i)$ is dominated by the term with the largest absolute value, so its sign is determined by the sign of this term. To finish the proof, we find $T\subset S$ of size $|S|^{\Omega(1)}$ such that every $x\in T^{(r)}$ is generic, and the index of the largest element of $(C,|\sigma_1(x_1)|,\dots,|\sigma_r(x_r)|)$ does not depend on the choice of $x$. This ensures that $f(x)$ has the same sign for every $x\in T^{(r)}$, so $T$ is either a clique or an independent set of size $(\log N)^{\Omega(1)}$ in $H$.

\medskip

Next, let $H$ be an $N$-vertex $r$-uniform semialgebraic hypergraph of description complexity $(d,2,1)$. Then there exists $f:(\mathbb{R}^d)^r\rightarrow\mathbb{R}$ of degree at most 2 such that $x\in V(H)^{(r)}$ is an edge if and only if $f(x)\leq 0$. Using that $f$ has degree at most 2, we can write $f(x)=\sum_{I\in [r]^{(2)}}\phi_I(x_I)$
with suitable functions $\phi_I:(\mathbb{R}^d)^2\rightarrow\mathbb{R}$, $I\in [r]^{(2)}$. Now $\phi_I$ acts on the pair of vertices of $V(H)$, so $\{\phi_I(x_I)\}_{x\in V(H)}$ corresponds to an edge weighting of a complete graph instead of a sequence. Therefore, if one wants to adapt our approach for the $D=1$ case, one needs an analogue of shifted-exponential sequences for graph weightings. Our ultimate goal is to find a large subset $S$ such that $\sigma_I:S^{(2)}\rightarrow \mathbb{R}$ is exponentially separated for every $I\subset [r]^{(2)}$, where $\sigma_I$ is a slightly modified version of $\phi_I$. Unfortunately, allowing only shiftings will not be sufficient to achieve this goal.

Indeed, consider the following example. Let $\phi:[n]^{(2)}\rightarrow \mathbb{R}$ be defined as $\phi(a,b)=n^{a}+b$. It is easy to see that no shifting of $\phi$ is exponentially separated on more than 3 elements of $[n]$. The issue is that $\phi(a,b)\approx \rho(a)$ for some function $\rho$ only depending on $a$. To overcome this, we allow functions $\sigma$ of the form $\sigma(a,b)=\phi(a,b)-\rho_1(a)-\rho_2(b)$, and write $\phi\sim \sigma$ for all such functions $\sigma$. This resolves the issue: we show that there exist $S\subset V(H)$ and $\sigma_I:S^{(2)}\rightarrow\mathbb{R}$, $I\in [r]^{(2)}$, such that  $N\leq \tw_{8}(O(|S|^3))$, $\phi_I\sim \sigma_I$ and $\sigma_I$ is exponentially separated. Now, we can rewrite
$$f(x)=\sum_{I\in [r]^{(2)}}\sigma_I(x_I)+\sum_{i\in [r]}\phi_i(x_i)$$
with suitable functions $\phi_i:S\rightarrow \mathbb{R}$. 

We further pass to a subset $T\subset S$ of size $(\log |S|)^{\Omega(1)}$ such that $\{\phi_i(x_i)\}_{x\in T}$ is shifted-exponential for every $i\in [r]$, and rewrite
\begin{equation}\label{eq:1}
f(x)=C+\sum_{I\in [r]^{(2)}}\sigma_I(x_I)+\sum_{i\in [r]}\sigma_i(x_i),
\end{equation}
with suitable $C\in \mathbb{R}$ and $\sigma_i$, $i\in [r]$, each of which is weakly-exponential on $T$. The rest of the proof is very similar as in the $D=1$ case. We observe that for a ''generic'' $x\in T^{(r)}$, the sum in the right-hand-side of (\ref{eq:1}) is dominated by one of the terms, so the sign of $f$ is the same as the sign of this dominant term. To finish the proof, we find $U\subset T$ such that $|T|\leq \tw_{4}(O(|U|))$, every $x\in U^{(r)}$ is generic, and the index of the largest element of $(C,(|\sigma_i(x_i)|)_{i\in [r]},(\sigma_{I}(x_I))_{I\in [r]^{(2)}})$ does not depend on the choice of $x$. This ensures that $f(x)$ has the same sign for every $x\in U^{(r)}$, so $U$ is either a clique or an independent set in $H$.

The most difficult part of the proof is finding the desired set $S$ and functions $\sigma_I$. For simplicity, consider a single function $\phi:A^{(2)}\rightarrow \mathbb{R}$ on some finite ordered set $A$. We define the evaluation of $\phi$ as a function $\psi_{\phi}$ acting on the 4-tuples of $A$ such that for $(a,b,c,d)\in A^{(4)}$, $$\psi_{\phi}(a,b,c,d)=\phi(a,c)-\phi(a,d)-\phi(b,c)+\phi(b,d).$$
Here, $\psi_{\phi}$ has the advantage that $\psi_{\phi}\equiv \psi_{\sigma}$ for every $\phi\sim \sigma$. In two rounds, we define certain colorings on the 5- and then 4-tuples of $S$ with the help of $\psi_{\phi}$, and use Ramsey's theorem to find a large monochromatic set $B$. This set $B$ allows the construction of some $\sigma\sim \phi$ that is exponentially separated on~$B$.

\section{Warm-up: degree 2 semialgebraic hypergraphs}\label{sect:D=2}

In this section, we prove the $(D,m)=(2,1)$ subcase of \Cref{thm:main} in a somewhat simplified manner. This serves to illustrate the key ideas of the general proof while sidestepping the more technical details.

Given an ordered set $A$, define an equivalence relation on the space of functions $\{\phi:A^{(2)}\rightarrow \mathbb{R}\}$ as follows. Write $\phi\sim \phi'$ if there exist functions $\rho_1,\rho_2:A\rightarrow\mathbb{R}$ such that $$\phi(a,b)=\phi'(a,b)-\rho_1(a)-\rho_2(b)$$ for every $(a,b)\in A^{(2)}.$
Given a function $\phi:A^{(2)}\rightarrow \mathbb{R}$, we define the evaluation $\psi_{\phi}:A^{(4)}\rightarrow\mathbb{R}$ such that
$$\psi_{\phi}(a,b,c,d)=\phi(a,c)-\phi(a,d)-\phi(b,c)+\phi(b,d).$$
The advantage of $\psi_{\phi}$ is that it is invariant on the equivalence class of $\phi$, that is, if $\phi\sim \phi'$, then $\psi_{\phi}\equiv \psi_{\phi'}$. If $\phi$ is clear from the context, we write simply $\psi$ instead of $\psi_{\phi}$. The following is our main technical lemma.

\begin{lemma}\label{lemma:exp_sep}
Let $\phi:[N]^{(2)}\rightarrow \mathbb{R}$. Then there exist $S\subset [N]$ of size $n$  for some $N\leq \tw_8(O(n^{3}))$ and $\sigma:S^{(2)}\rightarrow \mathbb{R}$ such that $\sigma\sim \phi$ on $S$ and $\sigma$ is exponentially separated.
\end{lemma}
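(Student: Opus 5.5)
The plan is to construct $\sigma$ from $\phi$ using two fixed anchor points, so that $\sigma$ becomes a single evaluation $\psi$. Then I would use two Ramsey steps and \Cref{lemma:rainbow} to control the sizes of these evaluations.

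\emph{Anchors.} Reserve the smallest element $w$ and the largest element $z$ of the final set. For $a<b$ strictly between them, set
$$\sigma(a,b):=\phi(a,b)-\phi(a,z)-\phi(w,b)+\phi(w,z).$$
This has the form $\phi(a,b)-\rho_1(a)-\rho_2(b)$, so $\sigma\sim\phi$. By definition, $\sigma(a,b)=-\psi(w,a,b,z)$. The task therefore reduces to finding a set on which the values $\psi(w,a,b,z)$ over the pairs $(a,b)$ all have one sign and pairwise differ by a factor of at least $2$ in absolute value.

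\emph{Cocycle identities.} The key structural fact is that $\psi$ is additive in each of its two index pairs:
$$\psi(a,c,d,e)=\psi(a,b,d,e)+\psi(b,c,d,e),\qquad \psi(a,b,c,e)=\psi(a,b,c,d)+\psi(a,b,d,e).$$
So $\psi(w,a,b,z)$ decomposes into sums of evaluations on shorter intervals. This is the two-dimensional analogue of a sequence of differences $a_j-a_i$.

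\emph{First Ramsey round (5-tuples).} For $a<b<c<d<e$, colour the tuple by the relative order and sign pattern of the two splittings. In the first coordinate, this means the $\typezero$–$\typefour$-style type of the triple $(0,\psi(a,b,d,e),\psi(a,c,d,e))$. In the second coordinate, it means the analogous type for $(0,\psi(a,b,c,d),\psi(a,b,c,e))$. I would also record the signs of the relevant $\psi$ values and whether each ratio is at least $2$. This is a bounded colouring of 5-tuples, so \Cref{thm:Ramsey} gives a monochromatic set $B_1$ with $N\le\tw_5(O(|B_1|))$. By the argument of \Cref{lemma:cups_caps}, applied coordinatewise, homogeneity forces $\psi(x,y,d,e)$ to be weakly-exponential as $y$ (or $x$) moves along $B_1$ with the other three points fixed. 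The same holds in the second coordinate. Type $\typezero$ is excluded once $|B_1|\ge 5$.

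\emph{Second Ramsey round (4-tuples).} Colour $(a,b,c,d)\in B_1^{(4)}$ by the sign of $\psi(a,b,c,d)$, and by which of the two one-sided comparisons dominates. Homogeneity yields $B_2$ with $|B_1|\le\tw_4(O(|B_2|))$, so altogether $N\le\tw_8(O(|B_2|))$. On $B_2$, all values $\psi(w,a,b,z)$ have a common sign. Combining the one-coordinate exponential behaviour from both rounds, each such value is dominated by a single term in its cocycle expansion, up to a bounded factor.

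\emph{Separation via \Cref{lemma:rainbow}.} Finally, I would take $\gamma(a,b):=\log_2|\psi(w,a,b,z)|$ on the complete graph ($r=2$) on $B_2\setminus\{w,z\}$. The exponential structure should show that for each edge $(a,b)$, only boundedly many edges sharing a vertex with it have $\gamma$ within $1$. Then \Cref{lemma:rainbow} gives a subset $S$ of size $\Omega(|B_2|^{1/3})$ on which all $\gamma$-values differ by at least $1$, which is exactly exponential separation. Adding back $w,z$ as anchors outside $S$ gives $N\le\tw_8(O(n^3))$.

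\emph{Main obstacle.} I expect the hardest step to be verifying the bounded-conflict hypothesis of \Cref{lemma:rainbow}. This means showing that, after the two homogenisations, fixing $a$ and varying $b$ (or vice versa) makes $|\psi(w,a,b,z)|$ genuinely exponential, not merely monotone. This is exactly where the full 5-tuple colouring, rather than a 4-tuple colouring alone, has to be used. If one-sided exponential control is insufficient, I would refine the 5-tuple colouring to also compare $\psi(a,b,d,e)$ with $2\psi(b,c,d,e)$ in both orientations.
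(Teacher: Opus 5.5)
\textbf{The gap is in your first step.} The two-anchor representative $\sigma(a,b)=-\psi(w,a,b,z)$ is one fixed element of the equivalence class of $\phi$. For many $\phi$ it is not exponentially separated on any set with three points between the anchors, so the problem you reduce to is false in general.

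Take $\phi(a,b)=2^{Nb+a}$. This $\phi$ is already exponentially separated on all of $[N]$, since the exponents $Nb+a$ with $1\le a<b\le N$ are distinct integers. For any $w<a<b<b'<z$ you get $\sigma(a,b)=-(2^a-2^w)(2^{Nz}-2^{Nb})$, hence
\[
1<\frac{|\sigma(a,b)|}{|\sigma(a,b')|}=\frac{2^{Nz}-2^{Nb}}{2^{Nz}-2^{Nb'}}\le \frac{1}{1-2^{-N}}<2 .
\]
In terms of your cocycle expansion, $\psi(w,a,b,z)$ is a sum over the rectangle $[w,a)\times[b,z)$. Here the cells next to $z$ dominate, so the value is monotone in $b$ but essentially independent of it.

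This is exactly the obstacle you flag at the end. It cannot be removed by refining the colourings, because it holds on every subset. Symmetrically, the first coordinate degenerates whenever the contributions concentrate near $w$. A common corner $(w,z)$ can only hope to work in the orientation where $|\psi|$ grows towards the corner $(a,b)$.

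\textbf{The missing idea} is that the normalisation must be chosen adaptively from the types that Ramsey produces, and in the second coordinate it must depend on the row. The paper proceeds as follows.
\begin{enumerate}
\item It colours 5-tuples by comparing $\psi(a,b,d,e)$ with $\psi(b,c,d,e)$, which is essentially your first-coordinate colouring.
\item Together with \Cref{lemma:cups_caps}, this gives shifts $t_d$ making $\{\phi(a,d+1)-\phi(a,d)-t_d\}_{a<d}$ weakly-exponential.
\item Subtracting the telescoped column shift $u_d=\sum_{i<d}t_i$ gives $\phi_1\sim\phi$ whose column differences $\phi_1(a,e)-\phi_1(a,d)$ are weakly-exponential in $a$.
\item It then colours 4-tuples by the type of $(\phi_1(a,b),\phi_1(a,c),\phi_1(a,d))$ and subtracts a row shift $w_a$, so that each row $\{\sigma(a,b)\}_{b>a}$ is weakly-exponential. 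For an increasing type this anchors row $a$ at the successor of $a$, which no anchor $z$ lying above the whole set can imitate.
\item Exponential growth along columns comes from passing to every other element and chaining the row inequality with the column-difference inequality.
\item Only then is \Cref{lemma:rainbow} applied.
\end{enumerate}
Your Ramsey rounds and final rainbow step have the right shape. They need to be run on these adaptively shifted functions rather than on $\psi(w,\cdot,\cdot,z)$.
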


\begin{proof}
 Define a coloring of $[N]^{(5)}$ with a color from $\{\typezero,\dots,\typefour\}$. We color $(a,b,c,d,e)$ depending on the relation of $X=\psi(a,b,d,e)$ and $Y=\psi(b,c,d,e)$, where $\psi=\psi_{\phi}$. We use color
\begin{description}
    \item[\hspace{10pt} $\typeone$] if $0\leq X\leq Y$,
    \item[\hspace{10pt} $\typetwo$] if $0\leq Y\leq X$,
    \item[\hspace{10pt} $\typethree$] if $0\leq (-X)\leq (-Y)$,
    \item[\hspace{10pt} $\typefour$] if $0\leq (-Y)\leq (-X)$,
    \item[\hspace{10pt} $\typezero$] if $X$ and $Y$ has different signs.
\end{description}
 Using the qualitative form of Ramsey's theorem (\Cref{thm:Ramsey}), there exists a monochromatic $S_1\subset [N]$ such that $N\leq \tw_5(O(|S_1|))$. Let $\tau_1$ denote the color of $S_1$. Assuming $N$ is sufficiently large, we have $\tau_1\neq \typezero$, as any set completely colored with $\typezero$ has size at most $6$.

For ease of notation, relabel the elements of $S_1$ such that $S_1=[N_1]$. Fix some $d\leq N_1-1$, and consider the sequence 
$$x^{(d)}_a=x_a=\phi(a,d+1)-\phi(a,d)$$
for $a=1,\dots,d-1$. We have $\psi(a,b,d,d+1)=x_b-x_a$, so the fact that $S_1$ is monochromatic of color $\tau_1$ implies that $\{x_a\}_{a=1,\dots,d-1}$ is a shifted-exponential sequence of type $\tau_1$ by \Cref{lemma:cups_caps}. Therefore, there exists $t_d$ such that $\{x^{(d)}_a-t_d\}_{a=1,\dots,d-1}$ is weakly-exponential of type $\tau_1$. Let $u_d=\sum_{i<d}t_i$ and define $\phi_1:S_1^{(2)}\rightarrow \mathbb{R}$ as $\phi_1(a,d)=\phi(a,d)-u_d$, then $\phi_1\sim \phi$ and $x_a-t_d=\phi_1(a,d+1)-\phi_1(a,d)$. Hence, we get that $$\{\phi_1(a,d+1)-\phi_1(a,d)\}_{a=1,\dots,d-1}$$ is weakly-exponential of type $\tau_1$. Now observe that for $a<d<e$, we have
\begin{align*}
    \phi_1(a,e)-\phi_1(a,d)&=\phi(a,e)-\phi(a,d)-(u_e-u_d)\\
    &=\sum_{d_0=d}^{e-1}\phi(a,d_0+1)-\phi(a,d_0)-t_{d_0}\\
    &=\sum_{d_0=d}^{e-1}\phi_1(a,d_0+1)-\phi_1(a,d_0).
\end{align*}
As the sum of weakly-exponential sequences of the same type is also weakly-exponential, we get that for every $d<e\leq N_1$, the sequence
$$\{\phi_1(a,e)-\phi_1(a,d)\}_{a=1,\dots,d-1}$$
is weakly-exponential of type $\tau_1$.

Next, we define a coloring of $S_1^{(4)}$ with 5 colors. Given $I=(a,b,c,d)\in S_1^{(4)}$, color $I$ with the type of the triple $(\phi_1(a,b),\phi_1(a,c),\phi_1(a,d))$. By \Cref{thm:Ramsey}, there exists a monochromatic $S_2\subset S_1$ such that $|S_1|\leq \tw_4(O(|S_2|))$. Also, if $|S_2|>5$, the color of $S_2$ is not type $\typezero$. To simplify notation, relabel the elements of $S_2$ such that $S_2=[N_2]$. Then $S_2$ being monochromatic implies that for every $a\in [N_2]$, the sequence $\{\phi_1(a,b)\}_{b=a+1,\dots,N_2}$ is shifted-exponential of type $\tau_2$ for some $\tau_2\in [4]$. Therefore, there exists $w_a$ such that $\{\phi_1(a,b)-w_a\}_{b=a+1,\dots,N_2}$ is weakly-exponential of type $\tau_2$. Define $\sigma:S_2^{(2)}\rightarrow \mathbb{R}$ such that 
$$\sigma(a,b)=\phi_1(a,b)-w_a,$$
and observe that $\sigma(a,e)-\sigma(a,d)=\phi_1(a,e)-\phi_1(a,d)$ for every $a<d<e$. For convenience, we collect the important properties of $\sigma$:
\begin{itemize}
    \item $\sigma\sim \phi_1\sim \phi$ on $S_2$,
    \item $\{\sigma(a,b)\}_{b=a+1,\dots,N_2}$ is weakly-exponential of type $\tau_2$ for every $a\in S_2$,
    \item $\{\sigma(a,e)-\sigma(a,d)\}_{a=1,\dots,d-1}$ is weakly-exponential of type $\tau_1$ for every $d<e\leq N_2$.
\end{itemize}
For the sake of simplicity, we assume that $(\tau_1,\tau_2)=(\typeone,\typeone)$. The other cases can be handled similarly, which we will demonstrate in the detailed proof of the more general \Cref{lemma:exp_sep_general}. Let $S_3$ be the subset of even numbers in $S_2=[N_2]$. We show that $S_3$ satisfies the following property: 
\begin{enumerate}
    \item[4.] $\{\sigma(a,b)\}_{a\in S_3:a<b}$ is exponential for every $b\in S_3$.
\end{enumerate}
It is enough to show that for every $(a,b)\in S_3^{(2)}$ such that $a+2<b$, we have $0\leq 2\sigma(a,b)\leq \sigma(a+2,b)$. Note that $(a,a+2,b-1,b)\in S_2^{(4)}$, hence, using 2. and 3.,
\begin{itemize}
    \item[(i)]  $0\leq 2\sigma(a,b-1)\leq \sigma(a,b)$,
    \item[(ii)] $0\leq 2\sigma(a+2,b-1)\leq \sigma(a+2,b)$,
    \item[(iii)] $0\leq 4(\sigma(a,b)-\sigma(a,b-1))\leq \sigma(a+2,b)-\sigma(a+2,b-1).$
\end{itemize}
Combining these inequalities, we get
$$0\leq 2\sigma(a,b)\leq  4(\sigma(a,b)-\sigma(a,b-1))\leq \sigma(a+2,b)-\sigma(a+2,b-1)\leq \sigma(a+2,b),$$
where the first and second inequality holds by (i), the third inequality holds by (iii), and the last inequality holds by (ii). From this, we see that $0\leq 2\sigma(a,b)\leq \sigma(a+2,b),$ confirming the claim.

 Finally, we apply \Cref{lemma:rainbow} to find a subset $S\subset S_3$ of size $\Omega(|S_3|^{1/3})$ so that $\sigma$ is exponentially separated on $S$. Indeed, define the function $\gamma:S_3^{(2)}\rightarrow \mathbb{R}$ such that $\gamma(a,b)=\log_2 \sigma(a,b)$. Then by properties 3. and 4., $\gamma$ satisfies that for every $(a,b)\in S_3^{(2)}$, there are at most 4 other $(c,d)\in S_3^{(2)}$ such that $|\gamma(a,b)-\gamma(c,d)|< 1$, and $|\{a,b\}\cap\{c,d\}|=1$. Hence, \Cref{lemma:rainbow} ensures the existence of a set $S\subset S_3$ of size $\Omega(|S_3|^{1/3})$ such that $|\gamma(a,b)-\gamma(c,d)|\geq 1$ for every choice of distinct $(a,b),(c,d)$. Equivalently, $\sigma$ is exponentially separated on $S$.

 We finish the proof by bounding the size of $S$. We have $N\leq \tw_5(O(|S_1|))$, $|S_1|\leq \tw_4(O(|S_2|))$, $|S_3|=O(|S_2|)$, and $|S|=O(|S_3|^3)$. From this,  $N=\tw_{8}(O(n^{3}))$, so the set $S$ and function $\sigma$ satisfy the desired conditions.
\end{proof}

We need a strengthening of this lemma that applies to a small set $\Phi$ of functions $\phi:A^{(2)}\rightarrow \mathbb{R}$ simultaneously. The proof of this is essentially identical to the previous proof, but slightly more technical. We omit its proof, as a more general statement is proved later as \Cref{lemma:exp_sep_general}.

\begin{lemma}\label{lemma:exp_sep2}
Let $s$ be a positive integer constant, and let $\phi_1,\dots,\phi_s:[N]^{(2)}\rightarrow \mathbb{R}$. Then there exist $S\subset [N]$ of size $n$  for some $N\leq \tw_8(O(n^{3}))$ and $\phi_1',\dots,\phi_s':S^{(2)}\rightarrow \mathbb{R}$ such that $\sigma_i\sim \phi_i$ on $S$ and $\sigma_i$ is exponentially separated for $i\in [s]$.
\end{lemma}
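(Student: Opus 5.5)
We follow the proof of \Cref{lemma:exp_sep} step by step and process all $s$ functions at once. The only changes are that colorings now use product color sets, and the final application of \Cref{lemma:rainbow} uses a vector-valued $\gamma$. Since $s$ is a constant, the number of colors stays constant, so each application of \Cref{thm:Ramsey} costs the same tower height as before.

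\emph{First Ramsey step.} Color each $(a,b,c,d,e)\in[N]^{(5)}$ by the vector $(\kappa_1,\dots,\kappa_s)\in\{\typezero,\dots,\typefour\}^s$. Here $\kappa_i$ is the color assigned in the proof of \Cref{lemma:exp_sep} to the pair $X=\psi_{\phi_i}(a,b,d,e)$, $Y=\psi_{\phi_i}(b,c,d,e)$. This uses $5^s$ colors, so \Cref{thm:Ramsey} gives a monochromatic $S_1$ with $N\le\tw_5(O(|S_1|))$. Once $|S_1|>6$, no coordinate equals $\typezero$. For each $i$ separately we choose shifts $t^{(i)}_d$ and set $u^{(i)}_d=\sum_{j<d}t^{(i)}_j$. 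Subtracting $u^{(i)}_d$ from $\phi_i(a,d)$ gives functions $\phi_{i,1}\sim\phi_i$ such that $\{\phi_{i,1}(a,e)-\phi_{i,1}(a,d)\}_{a<d}$ is weakly-exponential of type $\tau_1^{(i)}$ for all $d<e$. This uses the telescoping argument and the closure of weakly-exponential sequences of the same type under sums.

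\emph{Second Ramsey step.} Color $(a,b,c,d)\in S_1^{(4)}$ by the vector of types of the triples $(\phi_{i,1}(a,b),\phi_{i,1}(a,c),\phi_{i,1}(a,d))$, $i\in[s]$. This again uses constantly many colors, and \Cref{thm:Ramsey} gives $S_2$ with $|S_1|\le\tw_4(O(|S_2|))$. Subtracting $w^{(i)}_a$ yields $\sigma_i\sim\phi_i$ with the following properties:
\begin{itemize}
\item $\{\sigma_i(a,b)\}_{b>a}$ is weakly-exponential of type $\tau^{(i)}_2$;
\item the differences $\{\sigma_i(a,e)-\sigma_i(a,d)\}_{a<d}$ are weakly-exponential of type $\tau^{(i)}_1$.
\end{itemize}

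\emph{Passing to every other element.} Take every other element of $S_2$ to get $S_3$. For each $i$, the chain of inequalities (i)–(iii) from \Cref{lemma:exp_sep} shows that $\{\sigma_i(a,b)\}_{a<b}$ is, up to sign and reversal, exponential in $a$ for each fixed $b$. This is the step I expect to be the main obstacle: the other combinations of $(\tau^{(i)}_1,\tau^{(i)}_2)$ were not treated in \Cref{lemma:exp_sep}. The plan there is to reduce by symmetry (replace $\sigma_i$ by $-\sigma_i$, and reverse the order when the type is $\typetwo$ or $\typefour$) to a small number of sign and direction patterns. In each pattern the same three-inequality chain should give either $|\sigma_i(a,b)|\ge 2|\sigma_i(a+2,b)|$ or the reverse. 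The real point to verify is the mixed case, where the two types disagree in direction. There one may need to discard a further constant fraction of $S_2$, or argue that one term dominates. Since the types are uniform across $S_2$, whichever inequality we obtain holds uniformly.

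\emph{Rainbow step.} Set $\gamma(a,b)=(\log_2|\sigma_1(a,b)|,\dots,\log_2|\sigma_s(a,b)|)\in\mathbb{R}^s$. The monotonicity properties in both coordinates mean that each pair has at most $O(1)$ conflicting pairs sharing one vertex in some coordinate. \Cref{lemma:rainbow} with $r=2$ then yields $S\subset S_3$ of size $\Omega(|S_3|^{1/3})$ on which every $\sigma_i$ is exponentially separated. The sign condition holds because, after the reductions above, each $\sigma_i$ has constant sign on $S$. Combining the bounds gives $N\le\tw_5(\tw_4(O(n^3)))=\tw_8(O(n^3))$.
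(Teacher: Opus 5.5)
\textbf{Same route as the paper, but the step you flag is left open.} The paper omits this proof and defers to \Cref{lemma:exp_sep_general} with $D=2$. That proof uses exactly your approach: product colorings in $\{\typezero,\dots,\typefour\}^{\Phi}$, the same two Ramsey steps, thinning to every other element, and \Cref{lemma:rainbow} with a vector-valued $\gamma$. However, the remedy you sketch for the open step does not work as stated. Reversing the order of the ground set turns $\sigma_i(a,b)$ into a function of the pair $(b,a)$. The hypothesis ``weakly-exponential in the second coordinate for fixed first'' then becomes a statement about the first coordinate for fixed second, and the difference hypothesis also changes shape. So reversal does not map your two hypotheses with types $(\tau_1^{(i)},\tau_2^{(i)})$ to another pair of the same form. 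Only negation $\sigma_i\mapsto-\sigma_i$ is a legitimate reduction; it swaps $\typeone\leftrightarrow\typethree$ and $\typetwo\leftrightarrow\typefour$ in both types at once.

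\textbf{How to close the mixed cases.} Choose the neighbour of $b$ according to $\tau_2$; no extra discarding or separate dominance argument is needed. Fix $i$, write $\sigma=\sigma_i$, and after negation assume $\tau_2\in\{\typeone,\typetwo\}$, so $\sigma\geq 0$. Let $a<a'<b$ lie in $S_3$, so $a,a'$ are non-consecutive in $S_2$, which gives the factor $4$ below.
\begin{itemize}
\item If $\tau_2=\typeone$, use the paper's $(a,a',b-1,b)$. Then $X(x):=\sigma(x,b)-\sigma(x,b-1)\in[\frac12,1]\,\sigma(x,b)$ is nonnegative, which forces $\tau_1\in\{\typeone,\typetwo\}$. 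Otherwise $X\equiv 0$, so $\sigma\equiv 0$ on $S_3$, which is trivially fine. For $\tau_1=\typeone$ you get $2\sigma(a,b)\leq 4X(a)\leq X(a')\leq\sigma(a',b)$. For $\tau_1=\typetwo$ you get symmetrically $2\sigma(a',b)\leq\sigma(a,b)$.
\item If $\tau_2=\typetwo$, the difference is dominated by its earlier endpoint, so working with $b-1$ only gives information about column $b-1\notin S_3$. Use $(a,a',b,b+1)$ instead, and also drop the last element of $S_2$. Now $X(x):=\sigma(x,b+1)-\sigma(x,b)\in-[\frac12,1]\,\sigma(x,b)$, which forces $\tau_1\in\{\typethree,\typefour\}$ (otherwise again $\sigma\equiv 0$). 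For $\tau_1=\typethree$ you get $2\sigma(a,b)\leq -4X(a)\leq -X(a')\leq\sigma(a',b)$, and for $\tau_1=\typefour$ the reverse inequality.
\end{itemize}
The neighbour is used only inside the argument, so a single set $S_3$ works for all $i$ at once. This is exactly the 16-case analysis in the claim inside \Cref{lemma:exp_sep_general}, where 8 combinations are excluded by sign. With it, your rainbow step, the sign condition, and the bound $\tw_5(\tw_4(O(n^3)))=\tw_8(O(n^3))$ go through as you wrote them.
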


Now we are ready to present the proof of the main theorem in the special case $(d,D,m)=(d,2,1)$.

\begin{theorem}
Let $r$ be a positive integer constant. Let $H$ be an $r$-uniform semialgebraic hypergraph of description complexity $(d,2,1)$ on $N$ vertices. Then $H$ contains a clique or an independent set of size $n$, where $N\leq \tw_{12}(O(n))$.
\end{theorem}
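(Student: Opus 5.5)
We follow the outline of Section~\ref{sect:outline}. Let $f:(\mathbb{R}^d)^r\rightarrow\mathbb{R}$ be the defining polynomial of degree at most $2$, so that $x\in V(H)^{(r)}$ is an edge iff $f(x)\leq 0$. Every monomial of $f$ involves the coordinates of at most two of the vertices $x_1,\dots,x_r$. Hence we can write
$$f(x)=C_0+\sum_{i\in[r]}\phi_i(x_i)+\sum_{I\in[r]^{(2)}}\phi_I(x_I),$$
where each $\phi_I$ is a function on pairs of vertices. Identify $V(H)$ with $[N]$ and apply \Cref{lemma:exp_sep2} to the $s=\binom{r}{2}$ functions $\phi_I$. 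This gives $S$ with $N\leq\tw_8(O(|S|^3))$ and exponentially separated $\sigma_I\sim\phi_I$ on $S$. Each difference $\phi_I-\sigma_I$ splits as $\rho_1(x_{i})+\rho_2(x_{j})$ for $I=(i,j)$, and these pieces are absorbed into new one-variable functions $\phi_i'$. Next we pass to a subset $T\subset S$ on which every $\{\phi'_i(x)\}_{x\in T}$ is shifted-exponential. For this we colour triples of $S$ by the vector of $r$ types from \Cref{lemma:cups_caps} and apply \Cref{thm:Ramsey} with $r=3$, which gives $|S|\leq \tw_3(O(|T|))$. After shifting, we obtain
$$f(x)=C+\sum_i\sigma_i(x_i)+\sum_I\sigma_I(x_I)$$
as in \eqref{eq:1}, with every $\sigma_i$ weakly-exponential on $T$ and every $\sigma_I$ exponentially separated.

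The remaining task is to make the dominant term constant. First pass to $T'\subset T$ with $|T'|=\Omega(|T|^{1/3})$ by applying \Cref{lemma:rainbow}. Use $\gamma$ with coordinates $\log_2|\sigma_i(a)|$ (viewed on pairs through either endpoint) and $\log_2|\sigma_I(a,b)|$, shifted by constants, so that across all the functions any two distinct values are at least a factor $2^{c}$ apart. The constant $c\approx \log_2(\binom r2+r+1)+1$ is obtained by rescaling $\gamma$ by $1/c$. Within a single function, exponential separation already gives a factor $2$, and iterating by thinning to every $k$-th element improves this to any constant factor. Across different functions, the conflicts are bounded for the neighbourhood condition of \Cref{lemma:rainbow}, since each function's values are separated. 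The effect of this step is that for every $x\in T'^{(r)}$ the $\binom r2+r+1$ numbers $|C|,|\sigma_i(x_i)|,|\sigma_I(x_I)|$ have a unique maximum, which exceeds the sum of all the others. Call such $x$ generic; the sign of $f(x)$ is then the sign of the maximal term.

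Now colour each $x\in T'^{(r)}$ by the index of its maximal term together with that term's sign. The signs of the $\sigma_I$ and $\sigma_i$ are constant on $T'$, so the sign is determined by the index. This is a colouring with $O(1)$ colours, and \Cref{thm:Ramsey} yields a monochromatic $U$ with $|T'|\leq\tw_r(O(|U|))$. That exponent depends on $r$, which the claimed bound forbids, so this is the main obstacle. To avoid it, I would make the comparison between two terms depend only on a bounded number of vertices. Because every term involves at most $2$ vertices, the relative order of $|\sigma_I(x_I)|$ and $|\sigma_J(x_J)|$ depends on at most $4$ vertices and their relative positions, and similarly for comparisons with $C$ or the $\sigma_i$. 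So we colour $4$-tuples of $T'$ by the full comparison pattern of all terms on all index placements (a bounded number of colours) and apply \Cref{thm:Ramsey} with $r=4$. On a monochromatic $U$, the argmax pattern is consistent, hence the argmax index is the same for all $x\in U^{(r)}$, so $f$ has constant sign and $U$ is homogeneous, with $|T'|\leq \tw_4(O(|U|))$.

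Finally we combine the bounds: $N\leq\tw_8(O(|S|^3))$, $|S|\leq\tw_3(O(|T|))$, $|T|=O(|T'|^3)$ and $|T'|\leq\tw_4(O(n))$. The polynomial losses are absorbed into the $O(\cdot)$ of the next tower level, and the identity $\tw_k(\tw_\ell(x))=\tw_{k+\ell-1}(x)$ then gives height $8+3+4-2=13$. To reach the claimed $12$, I would merge the shifted-exponential step into the final Ramsey colouring. Concretely, colour $4$-tuples of $S$ simultaneously by the triple types of the $\phi'_i$ and the comparison pattern. The comparisons with the $\sigma_i$ depend on the shift, so they have to be read as comparisons of differences, which I expect is where care is needed. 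This gives $N\leq\tw_8(\tw_4(O(n)))=\tw_{11}$, with room to spare for the extra $\tw$ lost in the rainbow step.
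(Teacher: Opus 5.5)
Your plan follows the paper's architecture: decompose, apply \Cref{lemma:exp_sep2}, absorb the $\rho$'s, make the one-variable parts shifted-exponential, then Ramsey on $4$-tuples coloured by comparison patterns. As written, however, it only proves $N\leq \tw_{13}(O(n))$, as your own count shows, and the repair you sketch for the missing level is not an argument.

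The extra level comes from making the $\phi'_i$ shifted-exponential by colouring triples and applying $3$-uniform Ramsey, which costs a full $\tw_3$. Folding this into the $4$-uniform colouring does not work as described. The shifts $w_i$ are only defined once the subset and its types have been fixed. The same holds for everything built from them: $\sigma_i=\phi'_i-w_i$, the constant $C=C_0+\sum_i w_i$, your genericity condition, and the comparison pattern. So you cannot colour by any of these in advance. Reading $|\sigma_i(x_i)|$ as differences would need auxiliary vertices inside the coloured tuples and a separate treatment of $C$, and you carry out neither. Your claimed $\tw_{11}$, which beats what is being proved, is a warning sign. Also, the deletion step loses only a polynomial, not a tower level.

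The missing ingredient is the one the paper uses: Lemma 12 of \cite{JinTomon}. It gives a common $S\subset S_1$ on which all $r$ one-variable functions are simultaneously shifted-exponential, with $|S_1|\leq 2^{|S|^{O(1)}}$, a single exponential loss. Composing $\tw_8$, $\tw_2$ and $\tw_4$ then gives height $8+2+4-2=12$.

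A secondary problem is that your genericity step is not an application of \Cref{lemma:rainbow}.
\begin{itemize}
\item That lemma only separates values of the same coordinate on distinct edges.
\item You need comparisons across different functions, and some of these live on the same edge, e.g.\ $|\sigma_{(1,2)}(a,b)|$ versus $|\sigma_1(a)|$.
\item A coordinate $\log_2|\sigma_i|$ read off an endpoint is constant on all pairs through that endpoint, which violates the lemma's hypothesis.
\end{itemize}
A custom deletion count does go through, because every $\sigma_I$ and $\sigma_i$ is globally exponentially separated and so all conflicts are sparse, but you must argue it. The paper avoids this entirely: it adds a ``bad'' colour to the $4$-uniform colouring and uses exponential separation to show that a large monochromatic set cannot receive it. Finally, thinning to every $k$-th element does not boost the separation factor for functions on pairs. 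You never need within-function separation anyway, since each $\sigma_I$ occurs once in $f(x)$.
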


\begin{proof}
Let $f:(\mathbb{R}^d)^r\rightarrow \mathbb{R}$ be a polynomial of degree at most $2$ such that $x\in V(H)^{(r)}$ is an edge if $f(x)\leq 0$. Observe that we can write
$$f(x)=\sum_{I\in [r]^{(2)}}\phi_I(x_I),$$
where $\phi_I:(\mathbb{R}^d)^2\rightarrow \mathbb{R}$ is a polynomial of degree at most 2 for every $I\in [r]^{(2)}$. We note that the choice of $(\phi_I)_{I\in [r]^{(2)}}$ is not unique, but we fix any choice. In what follows, we will not use that $\phi_I$ is a polynomial, we only care that $\phi_I$ acts on $V(H)^{(2)}$. 

Using \Cref{lemma:exp_sep2}, there exist $S_1\subset V(H)$  and $\sigma_I:S_1^{(2)}\rightarrow \mathbb{R}$ such that $N\leq \tw_{8}(O(|S_1|^{3}))$, $\sigma_I\sim \phi_I$ on $S_1$, and $\sigma_I$ is exponentially separated for $I\in [r]^{(2)}$. This means that there exist $\rho_{I,1},\rho_{I,2}:S_1\rightarrow \mathbb{R}$ such that $$\phi_I(a,b)=\sigma(a,b)+\rho_{I,1}(a)+\rho_{I,2}(b).$$
For $i\in [r]$ and $a\in S_1$, define $$\phi_i(a)=\sum_{(i,j)\in [r]^{(2)}}\rho_{(i,j),1}(a)+\sum_{(j,i)\in [r]^{(2)}}\rho_{(j,i),2}(a),$$ 
then
$$f(x)=\sum_{I\in [r]^{(2)}}\phi_I(x_I)=\sum_{I\in [r]^{(2)}}\sigma_I(x_I)+\sum_{i\in [r]}\phi_i(x_i).$$

Next, we cite Lemma 12 in \cite{JinTomon} to find a subset $S\subset S_1$ and numbers $w_i$ such that setting $\sigma_i(a)=\phi_i(a)-w_i$, the sequence $\{\sigma_i(a)\}_{a\in S}$ is weakly-exponential, and $|S_1|\leq 2^{|S|^{O(1)}}$. Let $\sigma_{\emptyset}=\sum_{i\in [r]}w_i$, then
$$\sum_{I\in [r]^{(2)}}\phi_I(x_I)=\sum_{I\in [r]^{(\leq 2)}}\sigma_I(x_I).$$

Next, define a coloring of the complete 4-uniform hypergraph on $S$ as follows. Given a 4-tuple $a\in S^{(4)}$, consider all the numbers $|\sigma_I(a_J)|$ for $I\in [r]^{(\leq 2)}$ and $J\subset [4]^{(|I|)}$. Order these $R=6\binom{r}{2}+4r+1$ numbers in increasing order, and record the \emph{signatures} of the numbers. Here, the signature of $|\sigma_I(a_J)|$ is $(I,J)$. Also, say that $a$ is \emph{bad} if 
$$\frac{1}{10r^2}|\sigma_I(a_J)|<|\sigma_{I'}(a_{J'})|<10r^2|\sigma_I(a_J)|$$ for some $(I,J)$ and $(I',J')$ with $J\neq J'$. Color $a$ with color 0 if it is bad, otherwise color it with the sequence of $R$ signatures. This defines a coloring of $S^{(4)}$ with at most $R!+1=O(1)$ numbers. Applying \Cref{thm:Ramsey}, there exists monochromatic $T\subset S$ such that $|S|\leq \tw_4(O(|T|)$. 

First, observe that $T$ cannot be color 0, assuming $N$ is sufficiently large. Indeed, using that $\sigma_I$ is exponentially separated for every $I\in [r]^{(\leq 2)}$, it is easy to argue that a clique of color 0 has size $O(1)$. Therefore, $T$ is a monochromatic clique with no bad 4-tuples. Consider $x\in T^{(r)}$, which is an edge of $H$ if and only if  
$$\sum_{I\in [r]^{(\leq 2)}}\sigma_I(x_I)\leq 0.$$
The sum on the left hand side contains $\binom{r}{2}+r+1$ numbers, and if $z$ and $z'$ are two such numbers, then $|z|/|z'|\not\in (1/10r^2,10r^2)$, using the property the no element of $T^{(4)}$ is bad. But this means that the sign of the sum is the sign of the number with the largest absolute value. Using that $T$ is monochromatic, we get that there is $I_0\subset [r]^{(\leq 2)}$ such that $|\sigma_{I_0}(x_{I_0})|$ is always the largest, as the order of the numbers $\{|\sigma_{I}(x_I)|:I\in [r]^{(\leq 2)}\}$ is uniquely determined by the color of $T$. But as $\sigma_{I_0}(x_{I_0})$ has the same sign for all choices $x\in T^{(r)}$, we conclude that $T$ is either a clique or an independent set in $H$. 

We finish the proof by bounding size of $S$. We have $N\leq \tw_{8}(O(|S_1|^{3}))$, $|S_1|\leq 2^{|S|^{O(1)}}$, and $|S|\leq \tw_4(O(|T|))$. Therefore, $N\leq \tw_{12}(O(|T|))$.
\end{proof}

\section{Low-degree semialgebraic hypergraphs}\label{sect:proof}

In this section, we give a detailed proof of \Cref{thm:main}. The proof is divided into three subsections. In the first section, we discuss properties of functions of the form $\phi:A^{(D)}\rightarrow\mathbb{R}$, and define the natural extension of the evaluation $\psi$. In the second subsection, we prove our main technical lemma, which generalizes  \Cref{lemma:exp_sep} for $D$-tuples. We finish the proof in the third subsection.

\subsection{Functions defined on tuples}

Given an ordered set $A$, define an equivalence relation on the space of functions $\{\phi:A^{(D)}\rightarrow \mathbb{R}\}$ as follows. Write $\phi\sim \phi'$ if there exist $D$ functions $\rho_J:A^{(D-1)}\rightarrow \mathbb{R}$ for $J\in [D]^{(D-1)}$ such that for  every $a\in A^{(D)}$,
$$\phi(a)=\phi'(a)-\sum_{J\in [D]^{(D-1)}}\rho_J(a_J).$$
Given a function $\phi:A^{(D)}\rightarrow \mathbb{R}$,  we define $\psi_{\phi}:A^{(2D)}\rightarrow\mathbb{R}$ as
$$\psi_{\phi}(a)=\sum_{\eps\in \{0,1\}^{D}}(-1)^{\eps_1+\dots+\eps_{D}}\phi((a_{2i-\eps_i})_{i=1,\dots,D}).$$
We refer to $\psi_{\phi}$ as the \emph{hypercubic-evaluation} of $\phi$. For example, in case $D=1$, we have $\psi_{\phi}(a)=\phi(a_2)-\phi(a_1)$, while the $D=2$ case coincides with the definition in the previous section. In general, we may view the input as $D$ pairs $(a_1,a_2)$, $(a_3,a_4)$, $\dots$, $(a_{2D-1},a_{2D})$, and then we sum (with appropriate signs) the values of $\phi$ on the $D$-dimensional hypercube $\{a_1,a_2\}\times \{a_3,a_4\}\times\dots\times \{a_{2D-1},a_{2D}\}$.

We collect some simple properties of $\psi$. We say that $\phi:A^{(D)}\rightarrow \mathbb{R}$ \emph{does not depend on coordinate $i$} if $\phi(a)=\phi(a')$ for every $a,a'\in A^{(D)}$ where $a$ and $a'$ only differ in the $i$-th coordinate.
\begin{enumerate}[label={(\arabic*)}]
\item\label{property:1} Given $(a_1,a_2,b)\in A^{(2D)}$ with $a_1,a_2\in A$ and $b\in A^{(2D-2)}$, we have
$$\psi_{\phi}(a_1,a_2,b)=\psi_{\phi(a_2,.)}(b)-\psi_{\phi(a_1,.)}(b).$$
\item\label{property:2} If $\phi$ does not depend on one of the coordinates, then $\psi_{\phi}\equiv 0$.
\item\label{property:3} For every $\phi_1,\phi_2:A^{(D)}\rightarrow \mathbb{R}$,
$$\psi_{\phi_1}+\psi_{\phi_2}\equiv \psi_{\phi_1+\phi_2}.$$
\item\label{property:4} Given $(a,b_1,b_2,b_3,c)\in A^{(2D+1)}$ such that $a\in A^{(2k)}$ and $b_1,b_2,b_3\in A$ and $c\in A^{(2D-2k+1)}$, we have
$$\psi_{\phi}(a,b_1,b_2,c)+\psi_{\phi}(a,b_2,b_3,c)=\psi_{\phi}(a,b_1,b_3,c).$$
\item\label{property:5} $\psi_{\phi}$ is invariant on the equivalence class of $\phi$, that is, if $\phi\sim\phi'$, then $\psi_{\phi}\equiv \psi_{\phi'}$.
\item\label{property:6}  Assume that $A=[N]$. For every $a\in A^{(2D)}$,
$$\psi_{\phi}(a)=\sum_{b_1=a_1}^{a_2-1}\sum_{b_2=a_3}^{a_4-1}\dots\sum_{b_D=a_{2D-1}}^{a_{2D}-1}\psi_{\phi}(b_1,b_1+1,b_2,b_2+1,\dots,b_D,b_D+1).$$
\end{enumerate}
Indeed, \ref{property:1}-\ref{property:4} easily follow from the definition, \ref{property:5} follows from the combination of 2. and 3., and \ref{property:6} follows from repeated applications of \ref{property:4}. The following technical lemma is used later.

\begin{lemma}\label{lemma:phi_construction}
Let $\rho:A^{(D)}\rightarrow \mathbb{R}$ and define the function $\pi:A^{(2D)}\rightarrow  \mathbb{R}$ such that for every $a\in A^{(2D)}$,
$$\pi(a)=\sum_{a_1\leq \alpha_1<a_2}\sum_{a_3\leq \alpha_2<a_4}\dots\sum_{a_{2D-1}\leq \alpha_D<a_{2D}}\rho(\alpha_1,\dots,\alpha_D).$$
Then there exists $\phi:A^{(D)}\rightarrow\mathbb{R}$ such that $\psi_{\phi}\equiv \pi$.
\end{lemma}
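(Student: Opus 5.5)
The plan is to build $\phi$ explicitly as a ``discrete antiderivative'' of $\rho$ in all $D$ coordinates, and then check that $\psi_\phi$ telescopes to $\pi$. Assume $A=[N]$ after relabeling; this is harmless because both sides only depend on the order. Define, for $x=(x_1,\dots,x_D)\in A^{(D)}$,
$$\phi(x)=\sum_{\alpha\in A^{(D)}:\ \alpha_i<x_i\ \forall i,\ \alpha\preceq x}\rho(\alpha),$$
where the condition ``$\alpha\preceq x$'' is still to be fixed. The naive choice is to sum over all $\alpha\in A^{(D)}$ with $\alpha_i<x_i$ for every $i$.

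Next I check the hypercube sum for a single $a\in A^{(2D)}$. By inclusion–exclusion over the $2^D$ corners $\{a_1,a_2\}\times\dots\times\{a_{2D-1},a_{2D}\}$, the signed sum $\sum_\eps(-1)^{|\eps|}\phi(\dots)$ counts, for each $\alpha$, the product over $i$ of $[\alpha_i<a_{2i}]-[\alpha_i<a_{2i-1}]$. This product equals $[a_{2i-1}\le\alpha_i<a_{2i}]$ for every $i$. The one subtlety is that $\alpha$ must lie in $A^{(D)}$, i.e.\ be strictly increasing. Because $a_{2i}\le a_{2i+1}$, the boxes $[a_{2i-1},a_{2i})$ are pairwise disjoint and ordered, so any $\alpha$ in their product is automatically strictly increasing. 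Inclusion–exclusion therefore restricts exactly to such $\alpha$, and we get $\psi_\phi(a)=\pi(a)$. Each corner point $(a_{2i-\eps_i})_i$ is itself in $A^{(D)}$ since the $a_j$ are increasing, so $\phi$ is only ever evaluated where it is defined.

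The one step to verify carefully is the inclusion–exclusion itself. The indicator $[\alpha_i<x_i\ \forall i]$ factorizes as $\prod_i[\alpha_i<x_i]$, but the sum ranges over increasing $\alpha$ rather than all of $A^D$, so the factorization into one-dimensional differences has to be done pointwise in $\alpha$: fix $\alpha$ and expand the signed sum over $\eps$ as a product of differences. Done this way, the restriction to increasing $\alpha$ causes no problem and no correction terms appear. If one prefers, one can instead extend $\rho$ by zero to all of $A^D$, sum over all $\alpha\in A^D$, and note that the box condition forces monotonicity anyway. With the pointwise argument in place, the proof is a short computation.
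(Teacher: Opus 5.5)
Your proof is correct. Your $\phi$ is exactly the paper's $\phi(b)=\sum_{\beta_1<b_1}\cdots\sum_{\beta_D<b_D}\rho(\beta_1,\dots,\beta_D)$; the naive choice works, and no extra condition ``$\alpha\preceq x$'' is needed. The difference lies in how $\psi_\phi\equiv\pi$ is verified.

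The paper argues by induction on $D$. It peels off the first pair of the input and writes $\pi(t,u,a)=\sum_{t\le\alpha<u}\pi_\alpha(a)$ with $\pi_\alpha(a)=\pi(\alpha,\alpha+1,a)$. It then applies the induction hypothesis to $\rho_\alpha=\rho(\alpha,\cdot)$ to get $\phi_\alpha$ with $\psi_{\phi_\alpha}\equiv\pi_\alpha$. Finally it uses $\phi(t,b)=\sum_{\alpha<t}\phi_\alpha(b)$ together with properties (1) and (3) of the hypercubic evaluation.

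You instead fix $\alpha$ and factor the signed corner sum as $\prod_i\bigl([\alpha_i<a_{2i}]-[\alpha_i<a_{2i-1}]\bigr)=\prod_i[a_{2i-1}\le\alpha_i<a_{2i}]$, treating all $D$ coordinates at once. Your route is shorter. It also makes explicit a point the induction leaves implicit: summing only over increasing $\alpha$ loses nothing, because the box $\prod_i[a_{2i-1},a_{2i})$ contains only increasing tuples. And it avoids the domain bookkeeping of the inductive step. There, $\rho_t$ lives on $[t+2,N]^{(D-1)}$, so the identity $\phi(t,b)=\sum_{\alpha<t}\phi_\alpha(b)$ holds only up to terms not depending on $b_1$, which property (2) then annihilates.

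The paper's route, in exchange, stays inside the $\psi$-calculus of properties (1)--(6) that the rest of that section relies on.
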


\begin{proof}
Define $\phi$ such that for every $b\in A^{(D)}$,
$$\phi(b)=\sum_{\beta_1<b_1}\sum_{\beta_2<a_2}\dots\sum_{\beta_D<b_D}\rho(\beta_1,\dots,\beta_D).$$
We show that $\phi$ suffices by induction on $D$. In case $D=1$, we have
$$ \pi(a_1,a_2)=\sum_{a_1\leq \alpha<a_2}\rho(\alpha)=\sum_{\alpha<a_2}\rho(\alpha)-\sum_{\alpha<a_1}\rho(\alpha)=\phi(a_2)-\phi(a_1)=\psi_{\phi}(a_1,a_2),$$
so the claim holds. Now assume that $D\geq 2$. Without loss of generality, assume that $A=[N]$. For $t\in [N]$, define $ \pi_t:[t+2,N]^{(2D-2)}$ as follows. Let $a\in [t+2,N]^{(2D-2)}$, then set 
$$\pi_t(a):= \pi(t,t+1,a)=\sum_{a_1\leq \alpha_1<a_2}\dots\sum_{a_{2D-3}\leq \alpha_{D-1}<a_{2D-2}}\rho(t,\alpha_1,\dots,\alpha_{D-1}).$$
Then for every $(t,u,a)\in [N]^{(2D)}$ with $t,u\in [N]$, we have
$$ \pi(t,u,a)=\sum_{t\leq \alpha<u} \pi_{\alpha}(a).$$
Consider the function $\rho_t:[t+2,N]^{(D-1)}\rightarrow\mathbb{R}$ defined as $\rho_t(b):=\rho(t,b)$ for $b\in [t+2,N]^{(D-1)}$. Then for $a\in [t+2,N]^{(2D-2)}$,
$$ \pi_t(a)=\sum_{a_1\leq \alpha_1<a_2}\dots\sum_{a_{2D-3}\leq \alpha_{D-1}<a_{2D-2}}\rho_t(\alpha_1,\dots,\alpha_{D-1}).$$
Therefore, we can apply our induction hypothesis to deduce that the function $\phi_t:[t+2,N]^{(D-1)}$ defined as 
$$\phi_t(b)=\sum_{\beta_1<b_1}\sum_{\beta_2<b_2}\dots\sum_{\beta_{D-1}<b_{D-1}}\rho_t(\beta_1,\dots,\beta_{D-1})$$
satisfies $ \pi_t\equiv \psi_{\phi_t}$. Observe that for $(t,b)\in [N]^{(D)}$ with $t\in [N]$, we have
$$\phi(t,b)=\sum_{\alpha<t}\phi_{\alpha}(b).$$
Therefore, for $(t,u,a)\in [N]^{(2D)}$ with $t,u\in [N]$, we have
$$\psi_{\phi}(t,u,a)=\psi_{\phi(u,.)}(a)-\psi_{\phi(t,.)}(a)=\psi_{\phi(u,.)-\phi(t,.)}(a)=\sum_{t\leq\alpha<u}\psi_{\phi_{\alpha}}(a)=\sum_{t\leq\alpha<u} \pi_{\alpha}(a)= \pi(t,u,a).$$
In the first equality, we used \ref{property:1}, in the second and third, we used \ref{property:3}. This finishes the proof.
\end{proof}

For $k\in \{0,\dots,D\}$, we also define the evaluations $\psi^{(k)}_{\phi}:A^{(2D-k)}\rightarrow\mathbb{R}$ as follows. Let $(a,b)\in A^{(2D-k)}$ be an input, where $a\in A^{(k)}$ and $b\in A^{(2D-2k)}$, then
$$\psi^{(k)}_{\phi}(a,b):=\psi_{\phi(a,.)}(b)=\sum_{\eps\in \{0,1\}^{D-k}}(-1)^{\eps_1+\dots+\eps_{D-k}}\phi(a,(b_{2i-\eps_i})_{i\in [D-k]}).$$
These evaluations interpolate between $\psi_{\phi}$ and $\phi$ as we have  $\psi^{(0)}_{\phi}\equiv \psi_{\phi}$ and $\psi^{(D)}\equiv\phi$. Moreover, we have the following easy to check properties:
\begin{enumerate}[label={(\arabic*)}]
\setcounter{enumi}{6}
\item\label{property:7} If $\phi:A^{(D)}\rightarrow \mathbb{R}$ does not depend on the $\ell$-th coordinate, then $\psi_{\phi}^{(k)}\equiv 0$ for every $k<\ell$.
\item\label{property:8} If $(a,a_{k+1},a_{k+2},b)\in A^{(2D-k)}$ such that $a\in A^{(k)}$, $a_{k+1},a_{k+2}\in A$, and $b\in A^{(2D-2k-2)}$, then
$$\psi_{\phi}^{(k)}(a,a_{k+1},a_{k+2},b)=\psi_{\phi}^{(k+1)}(a,a_{k+2},b)-\psi_{\phi}^{(k+1)}(a,a_{k+1},b).$$\
\end{enumerate}
Here, \ref{property:7} follows from \ref{property:2}, and \ref{property:8} easily follows from the definition. 

\subsection{Exponentially separated functions on hypergraphs}

In this section, we present the main technical lemma underpinning the proof of \Cref{thm:main}. This lemma states that given $\phi:[N]^{(D)}\rightarrow\mathbb{R}$, we can find $\sigma\sim \phi$ that is exponentially separated on some large subset $S\subset [N]$. Moreover, the statement can be extended for a small set $\Phi$ of functions $\phi:[N]^{(D)}\rightarrow\mathbb{R}$ as well.

 \begin{lemma}\label{lemma:exp_sep_general}
Let $D,s$ be positive integer constants and $K=3D^2/2+D/2+1$. Let $\Phi$ be a set of $s$ functions $\phi:[N]^{(D)}\rightarrow \mathbb{R}$. Then there exists $S\subset [N]$ of size $n$  for some $N\leq \tw_K(n^{O(1)})$, and for every $\phi$, there exist $\sigma:S^{(D)}\rightarrow \mathbb{R}$ such that $\sigma\sim \phi$ on $S$ and $\sigma$ is exponentially separated.
\end{lemma}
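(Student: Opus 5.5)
We generalize the proof of \Cref{lemma:exp_sep}. The plan is to work through the interpolating evaluations $\psi^{(k)}_\phi$, for $k=0,1,\dots,D$ in turn, with one Ramsey step per level. After the step for level $k$, we will have replaced $\phi$ by an equivalent function $\phi_k\sim\phi$ such that, on the current set, the sequence of consecutive hypercubic differences in the $(k+1)$-st coordinate, with the other coordinates frozen, is weakly-exponential of a fixed type $\tau_k$. We run all $s$ functions of $\Phi$ simultaneously by taking product colorings; this only multiplies the number of colors by a constant.

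\emph{Level $k=0$.} Color $(2D+1)$-tuples $(a_1,a_2,a_3,b)$, with $b\in[N]^{(2D-2)}$, by the relative type of $X=\psi_\phi(a_1,a_2,b)$ and $Y=\psi_\phi(a_2,a_3,b)$, exactly as in the $D=2$ case. By \ref{property:4} and \Cref{lemma:cups_caps}, a monochromatic set $S_1$ (which costs $\tw_{2D+1}$) makes the following sequence shifted-exponential of type $\tau_0$ for each fixed $b$:
$$a\mapsto \psi^{(1)}_\phi(a,b_{\mathrm{cons}}),$$
where $b_{\mathrm{cons}}$ denotes consecutive pairs $(c,c+1)$ after relabelling. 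The shifts $t_b$ depend only on a $(D-1)$-tuple of consecutive-pair indices. We accumulate these shifts into a function $\rho$ and use \Cref{lemma:phi_construction} to realize them as $\psi$ of some function of $D-1$ variables in the last coordinates. Subtracting it gives $\phi_1\sim\phi$ whose first-coordinate consecutive differences are weakly-exponential. By \ref{property:6}, sums over consecutive blocks of weakly-exponential sequences of equal type stay weakly-exponential, so this extends to arbitrary pairs.

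\emph{Levels $k\ge1$.} We repeat the same procedure for $\psi^{(k)}$, coloring $(2D-k+1)$-tuples. By \ref{property:8}, consecutive differences of $\psi^{(k+1)}$ equal $\psi^{(k)}$, so the structure from level $k-1$ is inherited. Each level costs a Ramsey step of height $2D-k+1$. Summing over $k=0,\dots,D$ gives a total tower height of roughly $\sum_k(2D-k+1)$, which is of order $3D^2/2$. This matches the stated $K$, together with an extra $+1$ for the final $n^{O(1)}$ loss.

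After all $D+1$ levels, the final $\sigma\sim\phi$ satisfies two properties: $\sigma(a,\cdot)$ is weakly-exponential of a fixed type in the last coordinate, and for each coordinate $i$, the one-coordinate differences of $\sigma$ are weakly-exponential of a fixed type $\tau_i$. We then pass to every $(D{+}1)$-st element, or every even element, in order to chain these inequalities, exactly as in (i)--(iii). This shows that for every coordinate $i$, varying $a_i$ with the other coordinates fixed gives an exponential sequence of $|\sigma|$ with a fixed sign; the reversed types $\typetwo,\typefour$ are handled by reversing orientation. Finally, we set $\gamma=\log_2|\sigma|$. Conflicts among $D$-tuples sharing $D-1$ elements are then bounded by a constant $q$, and \Cref{lemma:rainbow} yields $S$ of polynomial size on which $\sigma$ is exponentially separated.

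\textbf{Main obstacle.} The hardest step is the construction of the equivalent function at each level. The shifts produced by \Cref{lemma:cups_caps} must be chosen consistently, depending only on the frozen coordinates, and must be expressible as a correction lying in the equivalence class, i.e.\ depending on at most $D-1$ coordinates. \Cref{lemma:phi_construction} is designed to handle exactly this. I would first verify that the correction subtracted at level $k$ does not destroy the types secured at earlier levels, using \ref{property:7}. The sign-pattern case analysis in the chaining step, for mixed types $(\tau_0,\dots,\tau_D)$, is the second delicate point.
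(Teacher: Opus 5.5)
Your construction matches the paper's. You make one Ramsey step per level comparing consecutive differences of $\psi^{(\ell)}$ and use \Cref{lemma:cups_caps} to get shifts. You keep only the shifts at consecutive-pair hypercubes $d^+$ and extend them by \ref{property:6}. You realize them through \Cref{lemma:phi_construction} as a correction independent of coordinate $\ell+1$, so \ref{property:7} protects earlier levels. You finish with sparsification and \Cref{lemma:rainbow} applied to $\log_2|\sigma|$.

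However, your level count is off by one, and so is your derivation of $K$. The step at level $\ell$ colors $(2D+1-\ell)$-tuples and makes $\psi^{(\ell+1)}$ weakly exponential in coordinate $\ell+1$. So the step $\ell=D-1$ already handles $\psi^{(D)}\equiv\sigma$ in the last coordinate. There are exactly $D$ Ramsey steps and no ``level $D$'' (there is no coordinate $D+1$). Then $K=\sum_{\ell=0}^{D-1}(2D+1-\ell)-(D-1)$ exactly, by $\tw_k(\tw_\ell(x))=\tw_{k+\ell-1}(x)$. The final losses (the halving and the power $1/(2D-1)$ from \Cref{lemma:rainbow}) enter only the argument of the tower, so they are not the source of the $+1$. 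Summing over $k=0,\dots,D$ as you wrote gives height $K+D$.

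The chaining step also needs more than ``exactly as in (i)--(iii)''. For $D\ge 3$ the level-$k$ information concerns $(D-k)$-dimensional hypercubic differences in coordinates $k+1,\dots,D$, not one-coordinate differences. To compare $\sigma(a,b,c)$ with $\sigma(a,b',c)$ for $b$ in position $m$, you must descend from $\psi^{(D)}$ to $\psi^{(m)}$ through $D-m$ identities $\psi^{(j-1)}=Y-Z$ from \ref{property:8}. In each, the type $\tau_j$ decides whether $Y$ or $Z$ dominates, and each descent can lose a factor $2$. Hence a single pass to every second element, which gains only a factor $4$ in $b$, does not close the chain when $m\le D-2$.

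The paper instead performs $D-1$ rounds of halving. Each round upgrades the $(k,m)$-property to the $(k+1,m)$-property one level at a time, via a case analysis over the two relevant types. Your other option does work in one pass and is a slightly more direct alternative. Keep every $L$-th element with $L\ge\max\{D,3\}$; for $D\ge 2$, every $(D+1)$-st element qualifies. Put the hypercube corners at $c_j$ and at its predecessor or successor according to $\tau_j$; the spacing leaves room for them. Then $\psi^{(m)}$ evaluated at $(a,b)$ and this hypercube lies in $\pm[2^{-(D-m)},1]\cdot\sigma(a,b,c)$, with a sign depending only on the types. The factor $2^{L}$ from weak exponentiality in $b$ then beats the loss $2^{D-m}$.
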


\begin{proof}
We assume the $N$ is sufficiently large with respect to $D$ and $s$. Let $S_0=[N]$ and $\phi_0=\phi$ for every $\phi\in \Phi$. If $S_{\ell}\subset [N]$ and $\phi_{\ell}:S_{\ell}^{(D)}\rightarrow \mathbb{R}$ is already defined for some $\ell<D$, we define a subset $S_{\ell+1}$ and $\phi_{\ell+1}$ as follows.
\medskip

\noindent
\textbf{CONSTRUCTION OF $S_{\ell+1}$.} Define a coloring of $S_{\ell}^{(2D+1-\ell)}$ as follows. Let $F\in S_{\ell}^{(2D+1-\ell)}$, and write $F=(a,b_1,b_2,b_3,c)$ with $a\in S_{\ell}^{(\ell)}$, $b_1,b_2,b_3\in S_{\ell}$, and $c\in S_{\ell}^{(2D-2-2\ell)}$. Then we color $F$ with some  $\chi\in\{\typezero,\dots,\typefour\}^{\Phi}$ as follows. For every $\phi\in \Phi$, we compare the two numbers
$$X:=\psi^{(\ell)}_{\phi_{\ell}}(a,b_1,b_2,c)\mbox{\ \ \ \ and\ \ \ \ }Y:=\psi^{(\ell)}_{\phi_{\ell}}(a,b_2,b_3,c),$$
and we set $\chi_{\phi}$ to be
\begin{description}
    \item[\hspace{10pt} $\typeone$] if $0\leq X\leq Y$,
    \item[\hspace{10pt} $\typetwo$] if $0\leq Y\leq X$,
    \item[\hspace{10pt} $\typethree$] if $0\leq (-X)\leq (-Y)$,
    \item[\hspace{10pt} $\typefour$] if $0\leq (-Y)\leq (-X)$,
    \item[\hspace{10pt} $\typezero$] if $X$ and $Y$ has different signs.
\end{description}
 This gives a coloring of $S_{\ell}^{(2D+1-\ell)}$ by at most $5^s=O(1)$ colors, so by \Cref{thm:Ramsey}, $S_{\ell}$ contains a monochromatic set $T$ of some color $\chi$ such that $|S_{\ell}|\leq \tw_{2D+1-\ell}(O(|T|)$.  If $\chi_{\phi}=\typezero$ for some $\phi$, then $|S_{\ell+2}|\leq 2D+2$, so we can assume that $\typezero$ is not used in $\chi$. 

In what follows, fix $\phi\in \Phi$. Fix $a\in T^{(\ell)}$ and $c\in T^{(2D-2-2\ell)}$, and for  $b\in T$ with $a_{\ell}<b<c_1$, write
$$z_b(a,c)=z_b:=\psi_{\phi_{\ell}}^{(\ell+1)}(a,b,c).$$
Then the sequence $$\{z_b\}_{b\in T: a_{\ell}<b<c_1}$$ is shifted-exponential of type $\chi_{\phi}$. Indeed, if $b<b'$, then by \ref{property:8} we can write
$$z_{b'}-z_b=\psi^{(\ell+1)}_{\phi_{\ell}}(a,b',c)-\psi_{\phi_{\ell}}^{(\ell+1)}(a,b,c)=\psi_{\phi_{\ell}}^{(\ell)}(a,b,b',c),$$
so $T$ being monochromatic of color $\chi_{\phi}$ means that for every $b_1<b_2<b_3\in B$, the triple $(z_{b_1},z_{b_2},z_{b_3})$ has  type $\chi_{\phi}$. Hence, $\{z_b\}_{b}$ is indeed shifted-exponential of type $\chi_{\phi}$, which means that there exists $t(a,c)\in \mathbb{R}$  such that $$\left\{\psi_{\phi_{\ell}}^{(\ell+1)}(a,b,c)-t(a,c)\right\}_{b\in T:a_{\ell}<b<c_1}$$ is weakly-exponential of type $\chi_{\phi}$. We set $S_{\ell+1}:=T$.

\medskip

\noindent
\textbf{CONSTRUCTION OF $\phi_{\ell+1}$.} Our goal is to construct a function $\phi_{\ell+1}$ with the following two properties:
\begin{itemize}
\item[(i)] the sequence $$\left\{\psi_{\phi_{\ell+1}}^{(\ell+1)}(a,b,c)\right\}_{b\in S_{\ell+1}:a_{\ell}<b<c}$$
is weakly-exponential of type $\chi_{\phi}$ for every possible choice of $a\in T^{(\ell)}$ and $c\in T^{(2D-2-2\ell)}$;
\item[(ii)] $\psi_{\phi_{\ell+1}}^{(k)}\equiv \psi_{\phi_{k}}^{(k)}$ on $S_{\ell+1}$ for every $k\leq \ell$.
\end{itemize}
We show that by considering the values of $t(a,c)$ for various special values of $c$, we can construct such a $\phi_{\ell+1}$ using additive properties of $\psi$.

Fix $a\in T^{(\ell)}$ and write $q=D-1-\ell$ for simplicity. Also, after relabeling, we may assume that $T=[M]$ for some integer $M$. Given $d\in [M]^{(q)}$ such that $d_i<d_{i+1}-1$ and $d_{q}<M$, write $$d^+=(d_i,d_i+1)_{i=1,\dots,q}\in [M]^{(2q)}.$$
We consider the values $t(a,d^+)$ in order to construct $\phi_{\ell+1}$. Define the function $\rho_a:[M]^{(q)}\rightarrow\mathbb{R}$ such that $\rho_a(d)=t(a,d^+)$ if $d^+$ is defined, otherwise $\rho_a(d)=0$. We recall that the sequence 
$$Z_{a,d}:=\left\{\psi_{\phi_{\ell}}^{(\ell+1)}(a,b,d^+)-\rho_a(d)\right\}_{b:a_{\ell}<b<d_1}$$
is weakly-exponential of type $\chi_{\phi}$. For $b\in [a_{\ell}+1,M]$, define the function $\phi^*=\phi_{\ell}(a,b,.)$. Then by \ref{property:6}, for arbitrary $c\in [b+1,M]^{(2q)}$, we have
$$\psi_{\phi_{\ell}}^{(\ell+1)}(a,b,c)=\psi_{\phi^*}(c)=\sum_{c_1\leq d_1<c_2}\sum_{c_3\leq d_2<c_4}\dots\sum_{c_{2q-1}\leq d_s<c_{2q}}\psi_{\phi^*}((d_1,\dots,d_q)^+).$$
This motivates the definition $ \pi_a:[a_{\ell}+1,M]^{(2q)}\rightarrow\mathbb{R}$ such that
$$ \pi_a(c)=\sum_{c_1\leq d_1<c_2}\sum_{c_3\leq d_2<c_4}\dots\sum_{c_{2q-1}\leq d_s<c_{2q}}\rho_a(d_1,\dots,d_q),$$
because then we can write 
$$\psi_{\phi_{\ell}}^{(\ell+1)}(a,b,c)- \pi_a(c)=\sum_{c_1\leq d_1<c_2}\sum_{c_3\leq d_2<c_4}\dots\sum_{c_{2q-1}\leq d_s<c_{2q}}\left[\psi_{\phi_{\ell}}^{(\ell+1)}(a,b,(d_1,\dots,d_q)^+)-\rho_a(d_1,\dots,d_q)\right]$$
This means that the sequence
$$Z_{a,c}:=\left\{\psi_{\phi_{\ell}}^{(\ell+1)}(a,b,c)- \pi_a(c)\right\}_{b:a_{\ell}<b<c_1}$$
is the sum of the sequences $Z_{a,d}$ for a collection of pairs $(a,d)$. But each $Z_{a,d}$ is weakly-exponential of type $\chi_{\phi}$, so $Z_{a,c}$ is also weakly-exponential of type $\chi_{\phi}$. The advantage of the function $ \pi_a$ is that we can apply \Cref{lemma:phi_construction} to find $\xi_a:[a_{\ell}+1,M]^{(q)}\rightarrow \mathbb{R}$ such that $\psi_{\xi_a}\equiv \pi_a$. Finally, define $\theta:[M]^{(D)}\rightarrow \mathbb{R}$ such that $\theta(a,b,d):=\xi_a(d)$ (so in particular $\theta$ does not depend on $b$, the $(\ell+1)$-th coordinate), and set $\phi_{\ell+1}:=\phi_{\ell}-\theta$. With this definition, we have
$$\psi_{\phi_{\ell}}^{(\ell+1)}(a,b,c)- \pi_a(c)=\psi_{\phi_{\ell}}^{(\ell+1)}(a,b,c)-\psi_{\theta}^{(\ell+1)}(a,b,c)=\psi_{\phi_{\ell+1}}^{(\ell+1)}(a,b,c).$$
Therefore, the sequence
$$Z_{a,c}=\left\{\psi_{\phi_{\ell+1}}^{(\ell+1)}(a,b,c)\right\}_{b:a_{\ell}<b<c_1}$$
is weakly-exponential of type $\chi_{\phi}$. Also, as $\theta(a,b,c)$ does not depend on  the $(\ell+1)$-th coordinate, we have $\psi_{\theta}^{(k)}\equiv 0$ for every $k\leq \ell$ by \ref{property:7}, and thus $\psi_{\phi_{k}}^{(k)}\equiv \psi_{\phi_{\ell+1}}^{(k)}$. This also ensures that $\phi_{\ell+1}\sim\phi_{\ell}\sim\phi$. Therefore, (i) and (ii) are both satisfied. 

\medskip

\noindent
\textbf{EXPONENTIALLY SEPARATING NEIGHBORHOODS.}
Let $U=S_{D}$ and for every $\phi\in \Phi$, set $\sigma=\phi_{D}$. Our next goal is to find a large $V\subset U$ such that for any $k\in [D]$, $\phi\in \Phi$, and $(a,c)\in V^{(D-1)}$ with $a\in V^{(k-1)}$ and $c\in V^{(D-k)}$, the sequence $$\{\sigma(a,b,c)\}_{b\in V:a_{k-1}<b<c_1}$$ is weakly-exponential of type depending only on $k$ and $\phi$. In other words, any sequence we get by fixing all but the $k$-th coordinate for any $k\in [D]$ is weakly-exponential. 

In the remainder of the proof, we fix $\phi\in \Phi$ and write simply $\psi^{(k)}$ instead of $\psi_{\sigma}^{(k)}$. We record that for every $1\leq k\leq D$, $(a,c)\in U^{(2D-k-1)}$ with $a\in U^{(k-1)}$ and $c\in U^{(2D-2k)}$, the sequence
$$\{\psi^{(k)}(a,b,c)\}_{a_{k-1}<b<c_1}$$
is weakly-exponential of some type $\tau_k$. We say that a set $W\subset U$ has the \emph{$(k,m)$-property} for $1\leq m\leq k\leq D$ if for every $(a,c)\in W^{(2D-k-1)}$, with $a\in W^{(m-1)}$ and $c\in W^{(2D-k-m)}$, the sequence
$$\{\psi^{(k)}(a,b,c)\}_{b\in W:a_{m-1}<b<c_1}$$
is weakly-exponential of some type $\tau_{k,m}$. Thus, $U$ (and every subset of $U$) has the $(m,m)$-property for $m\in [D]$. Our goal can be reformulated as finding $V\subset U$ such that $V$ has the $(D,m)$-property for every $m\in [D]$. This suffices by recalling that $\psi^{(D)}\equiv \sigma$.

In what follows, we define a sequence of sets $U=U_0\supset U_{1}\supset\dots \supset U_{D-1}$ such that for every $\ell\in \{0,\dots,D-1\}$, $U_{\ell}$ has the $(\min\{D,m+\ell\},m)$-property for $m\in [D]$. This holds for $\ell=0$. Assume that $U_{\ell}$ is already defined satisfying required condition, then we define $U_{\ell+1}$ by removing the first and last, and every second element of $U_{\ell}$.
\begin{claim}
$U_{\ell+1}$ suffices.
\end{claim}
\begin{proof}
Fix some $m<D-\ell$ and write $k=m+\ell$. Consider $\psi^{(k+1)}$, and write the inputs of this function in the form 
$$(a,b,c,d,e)\mbox{ where }a\in U_{\ell}^{(m-1)}, b\in U_{\ell}, c\in U_{\ell}^{(\ell)}, d\in U_{\ell}, e\in U_{\ell}^{(2(D-k-1))}.$$ 
Let $b<b'$ and $d<d'$, and use \ref{property:8} to write the identities
$$\psi^{(k)}(a,b,c,d,d',e)=\psi^{(k+1)}(a,b,c,d',e)-\psi^{(k+1)}(a,b,c,d,e)$$
and
$$\psi^{(k)}(a,b',c,d,d',e)=\psi^{(k+1)}(a,b',c,d',e)-\psi^{(k+1)}(a,b',c,d,e).$$
For ease of notation, write
$$X_1=\psi^{(k)}(a,b,c,d,d',e), Y_1=\psi^{(k+1)}(a,b,c,d',e), Z_1=\psi^{(k+1)}(a,b,c,d,e)$$
and
$$X_2=\psi^{(k)}(a,b',c,d,d',e), Y_2=\psi^{(k+1)}(a,b',c,d',e), Z_2=\psi^{(k+1)}(a,b',c,d,e),$$
then $X_1=Y_1-Z_1$ and $X_2=Y_2-Z_2$. Using that $U_{\ell}$ has the $(k,k)$-property, we have for $i=1,2$ that 
\begin{alignat*}{2}
    &0\leq 2Z_i\leq Y_i&&\mbox{ if }\tau_{k}=\typeone,\\
    &0\leq 2Y_i\leq Z_i&&\mbox{ if }\tau_{k}=\typetwo,\\
    &0\leq (-2Z_i)\leq (-Y_i)&&\mbox{ if }\tau_{k}=\typethree,\\
    &0\leq (-2Y_i)\leq (-Z_i)&&\mbox{ if }\tau_{k}=\typefour.
\end{alignat*}
Therefore,
\begin{align*}
X_i\in \left[\frac{1}{2},1\right]Y_i\hspace{10pt}\mbox{ if }\tau_{k}\in \{\typeone,\typethree\},\\
X_i\in -\left[\frac{1}{2},1\right]Z_i\hspace{10pt}\mbox{ if }\tau_{k}\in \{\typetwo,\typefour\}.
\end{align*}
Next, observe that $U_{\ell}$ has the $(k,m)$-property. Hence, assuming that $b$ and $b'$ are not consecutive elements, we have
\begin{alignat*}{2}
    &0\leq 4X_1\leq X_2&&\mbox{ if }\tau_{k,m}=\typeone,\\
    &0\leq 4X_2\leq X_1&&\mbox{ if }\tau_{k,m}=\typetwo,\\
    &0\leq (-4X_1)\leq (-X_2)&&\mbox{ if }\tau_{k,m}=\typethree,\\
    &0\leq (-4X_2)\leq (-X_1)&&\mbox{ if }\tau_{k,m}=\typethree.
\end{alignat*}
We analyze these inequalities depending on the values of $(\tau_{k},\tau_{k,k})$. This presents 16 cases, of which 8 is not possible, as we will see. We present the detailed analysis of only a few of these cases, as most of the arguments are almost identical.
\begin{description}
    \item[($\typeone$,$\typeone$)] We have $X_i\in \left[\frac{1}{2},1\right]Y_i$ and $0\leq 4X_1\leq X_2$. Therefore, $$0\leq 2Y_1\leq 4X_1\leq X_2\leq Y_2.$$ In particular, $0\leq 2Y_1\leq Y_2$. As this is the case for every non-consecutive $b<b'$, we get that the sequence
    $$\left\{\psi^{(k+1)}(a,b,c,d',e)\right\}_{b\in U_{\ell+1}:a_{m-1}<b<c_1}$$
    is weakly-exponential of type $\tau_{k+1,\ell}=\typeone$. Moreover, as this is true for every choice of $d<d'$, we can choose $d'$ to be any element between $c$ and $e$, other than the successor of $c$ in $U_{\ell}$. In particular, if $a,c,e$ are all tuples in $U_{\ell+1}$, the successor of $c$ in $U_{\ell}$ is not in $U_{\ell+1}$. In conclusion, the sequence 
     $$\left\{\psi^{(k+1)}(a,b,c')\right\}_{b\in U_{\ell+1}:a_{m-1}<b<c'_1}$$
     is weakly-exponential of type $\typeone$ for every choice of $(a,c')\in U_{\ell+1}^{(2D-k-2)}$ with $a\in U_{\ell+1}^{(m-1)}$. Thus $U_{\ell+1}$ has property $(m+\ell+1,m)$.

     \item[($\typeone$,$\typetwo$)] We have $0\leq 2Y_2\leq 4X_2\leq X_1\leq Y_1$. Hence, we deduce that 
     $$\left\{\psi^{(k+1)}(a,b,c')\right\}_{b\in U_{\ell+1}:a_{m-1}<b<c'_1}$$
     is weakly-exponential of type $\typetwo$.
     \item[($\typeone$,$\typethree$)] This case is not possible. Indeed, we have $Y_i>0$ and $X_i\in [1/2,1]Y_i$, so $X_i>0$ as well. But this contradicts $\tau_{k,m}=\typethree$. All further ''Not possible'' cases are due to similar reason.
     \item[($\typeone$,$\typefour$)] Not possible.
     \item[($\typetwo$,$\typeone$)] Not possible.
     \item[($\typetwo$,$\typetwo$)] Not possible.
     \item[($\typetwo$,$\typethree$)] We have $0\leq 2Z_1\leq (-4X_1)\leq (-X_2)\leq Z_2$, so 
     $$\left\{\psi^{(k+1)}(a,b,c,d,e)\right\}_{b\in U_{\ell+1}:a_{m-1}<b<c_1}$$
     is weakly-exponential of type $\tau_{k+1,\ell}=\typeone$. As this is true for every $d<d'$, the only $d$ we are not allowed to chose is the predecessor of $e_1$. We conclude that 
     $$\left\{\psi^{(k+1)}(a,b,c')\right\}_{b\in U_{\ell+1}:a_{m-1}<b<c'_1}$$
     is weakly-exponential of type $\tau_{k+1,\ell}=\typeone$ for every choice of $(a,c')\in U_{\ell+1}^{(2D-k-2)}$ with $a\in U_{\ell+1}^{(m-1)}$. Thus $U_{\ell+1}$ has property $(m+\ell+1,m)$.
     \item[($\typetwo$,$\typefour$)]  $0\leq 2Z_2\leq (-4X_2)\leq (-X_1)\leq Z_1$, so $\tau_{k+1,\ell}=\typetwo$.
     \item[($\typethree$,$\typeone$)] Not possible.
     \item[($\typethree$,$\typetwo$)] Not possible.
     \item[($\typethree$,$\typethree$)]  $0\leq (-2Y_1)\leq (-4X_1)\leq (-X_2)\leq (-Y_2)$, so $\tau_{k+1,\ell}=\typethree$.
     \item[($\typethree$,$\typefour$)]  $0\leq (-2Y_2)\leq (-4X_2)\leq (-X_1)\leq (-Y_1)$, so $\tau_{k+1,\ell}=\typefour$.
     \item[($\typefour$,$\typeone$)]  $0\leq (-2Z_1)\leq 4X_1\leq X_2\leq (-Z_1)$, so $\tau_{k+1,\ell}=\typethree$.
     \item[($\typefour$,$\typetwo$)]  $0\leq (-2Z_2)\leq 4X_2\leq X_1\leq (-Z_1)$, so $\tau_{k+1,\ell}=\typefour$.
     \item[($\typefour$,$\typethree$)] Not possible.
     \item[($\typefour$,$\typefour$)] Not possible.
\end{description}
\end{proof}

Set $V:=U_{D-1}$, then $|V|=\Omega(|U|)$ and $V$ has the $(D,m)$-property for every $m\in [D]$. Equivalently, for any $k\in [D]$ and $(a,c)\in V^{(D-1)}$ with $a\in V^{(k-1)}$ and $c\in V^{(D-k)}$, the sequence $$\left\{\sigma(a,b,c)\right\}_{b\in V:a_{k-1}<b<c_1}$$ is weakly-exponential. We refer to this as the \emph{weak separation property}.

\medskip

\noindent
\textbf{EXPONENTIALLY SEPARATING EVERYTHING.} We are almost done, all it remains is to find a large subset $S\subset V$ such that $\sigma$ is exponentially separated on $S^{(D)}$. The existence of such an $S$ of size $|S|=\Omega(|V|^{1/(2D-1)})$ follows almost immediately from \Cref{lemma:rainbow}. Indeed, consider the function $\gamma:V^{(D)}\rightarrow\mathbb{R}^{\Phi}$ defined as $\gamma(a)_{\phi}:=\log_2 |\sigma(a)|$. Then $\gamma$ satisfies the following property. Given $a\in V^{(D)}$, write $\langle a\rangle=\{a_1,\dots,a_D\}$. Then for every $a\in V^{(D)}$ and $\phi\in \Phi$, there are at most $2D^2$ other $b\in V^{(D)}$ such that $|\langle a\rangle\cap \langle b\rangle|=D-1$ and $|\gamma(a)_{\phi}-\gamma(b)_{\phi}|< 1$. This is true because if $I,J\in [D]^{(D-1)}$, then there are at most two such choices for $b$ such that $a_I=b_J$  by the weak separation property.  Hence, we can apply \Cref{lemma:rainbow} with $s=2D^2$ to get a set $S$ of size $|S|=\Omega(|V|^{1/(2D-1)})$ such that any two distinct $a,b\in S^{(D)}$ satisfies $|\gamma(a)_{\phi}-\gamma(b)_{\phi}|\geq 1$ for every $\phi\in \Phi$. But this implies that $\sigma$ is exponentially separated on $S^{(D)}$ for every $\phi\in \Phi$.

To finish the proof, we bound the size of $S$. We have $|S_0|=N$ and $|S_{\ell}|\leq \tw_{2D+1-\ell}(O(|S_{\ell+1}|))$ for $\ell=0,\dots,D-1$. Therefore, $U=S_D$ satisfies $N\leq \tw_{K}(O(|U|))$ with 
$$K=\left(\sum_{\ell=0}^{D-1}2D+1-\ell\right)-(D-1)=3D^2/2+D/2+1.$$ Moreover, $|V|=\Omega(|U|)$ and $|S|=\Omega(|V|^{1/(2D-1)})$. Thus, $N\leq \tw_{K}(|S|^{O(1)})$, finishing the proof.
\end{proof}

\subsection{Proof of the Main theorem}\label{sect:maintheorem}

In this section, we prove \Cref{thm:main}. In particular, we prove a more general result, which we prepare with two definitions.

\begin{definition}
    A function $f:A^{(r)}\rightarrow \mathbb{R}$ is \textbf{$D$-dependent} if there exist functions $\phi_I:A^{(D)}\rightarrow \mathbb{R}$ for $I\in [r]^{(D)}$ such that for every $a\in A^{(r)}$,
    $$f(a)=\sum_{I\in [r]^{(D)}}\phi_I(a_I).$$
\end{definition}

In other words, $f:A^{(r)}\rightarrow \mathbb{R}$ is $D$-dependent if it is the linear combinations of functions that depend on at most $D$ of the input variables. This extends the class of polynomials of degree $D$.

\begin{claim}
If $f:(\mathbb{R}^d)^r\rightarrow \mathbb{R}$ is polynomial of degree at most $D$, and $V\subset \mathbb{R}^d$ is an ordered set, then $f$ restricted to $V^{(r)}$ is $D$-dependent.
\end{claim}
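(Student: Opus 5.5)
The plan is to expand $f$ into monomials and group each monomial according to which of the $r$ vector variables it actually involves. Write $x=(x_1,\dots,x_r)$ with $x_i\in\mathbb{R}^d$. Every monomial of $f$ has total degree at most $D$, so it is a product of coordinate variables coming from at most $D$ distinct blocks $x_i$. Hence it depends on the variables $x_J$ for some set $J\subset[r]$ with $|J|\leq D$. Summing the monomials by their support set gives
$$f(x)=\sum_{J\subset [r],\,|J|\leq D} g_J(x_J),$$
where each $g_J$ is a polynomial in the block variables $x_j$, $j\in J$.

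The key step is to assign each such $J$ to some $I\in[r]^{(D)}$ with $J\subseteq I$. This needs $r\geq D$; if $r<D$, the degree bound can be lowered to $r$ and the same argument applies. Choose the assignment $J\mapsto I(J)$ once and for all, for instance by the lexicographically first superset. For $I\in[r]^{(D)}$ and $a\in V^{(D)}$, define
$$\phi_I(a):=\sum_{J:\,I(J)=I} g_J\big((a_{\iota(j)})_{j\in J}\big),$$
where $\iota(j)$ denotes the position of $j$ within $I$. Since $J\subseteq I$, this is a well-defined function of $a_I$. Because the sets $J$ are partitioned among the sets $I$, for every $a\in V^{(r)}$ we get $f(a)=\sum_{I\in[r]^{(D)}}\phi_I(a_I)$. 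This is exactly the definition of $D$-dependence, with each $\phi_I$ restricted to $V^{(D)}$.

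The proof has no real obstacle; the only point needing care is bookkeeping. The constant term and the lower-order terms with $|J|<D$ must each be absorbed into exactly one $\phi_I$, so that nothing is counted twice. Fixing the assignment $J\mapsto I(J)$ in advance guarantees this. The ordering on $V$ plays no role, beyond the fact that $a_I$ lists the coordinates in increasing index order, which the position map $\iota$ already accounts for.
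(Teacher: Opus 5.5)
Your proof is correct and is essentially the paper's: the paper also fixes an ordering of $[r]^{(D)}$ and sends each monomial to the first $D$-tuple $I$ containing every block index the monomial involves, then notes that the resulting $f_I$ depends only on $x_I$. Your caveat about $r<D$ is apt. The paper tacitly assumes $r\geq D$, and for $r<D$ the statement as written fails, since $[r]^{(D)}=\emptyset$ forces $f\equiv 0$ (take a nonzero constant $f$); there your reduction gives $r$-dependence rather than $D$-dependence.
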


\begin{proof}
  Fix an arbitrary ordering of the elements of $[r]^{(D)}$.  Let $x\in (\mathbb{R}^d)^r$, then each monomial in $f(x)$ has the form 
$$c\prod_{i=1}^r\prod_{j=1}^dx_{i,j}^{\alpha_{i,j}}$$
with $\sum_{i=1}^r\sum_{j=1}^d\alpha_{i,j}\leq D$. For $I\in [r]^{(D)}$, add this monomial to the function $f_I: (\mathbb{R}^d)^r\rightarrow\mathbb{R}$ if $I$ is the first $D$-tuple in the ordering that contains all $i\in [r]$ such that $\alpha_{i,1}+\dots+\alpha_{i,d}\geq 1$. As $f$ has degree at most $D$, each monomial is added to some $f_I$, so 
$$f(x)=\sum_{I\in [r]^{(D)}}f_I(x).$$
 Also, $f_I$ does not depend on $x_j$ if $j\not\in I$, so we may write $f_I(x)=\phi_I(x_I)$ with suitable function $\phi_I:V^{(D)}\rightarrow \mathbb{R}$, $x\in V^{(r)}$. This finishes the proof.
\end{proof}

\noindent
Next, using the notion of $D$-dependent functions, we extend the class of semialgebraic hypergraphs of complexity $(d,D,m)$ in the following natural manner.

\begin{definition}
    Let $A$ be an ordered set, let $f_1,\dots,f_m:A^{(r)}\rightarrow \mathbb{R}$ be $D$-dependent functions, and let $\Gamma: \{\texttt{false},\texttt{true}\}^{m}\rightarrow \{\texttt{false},\texttt{true}\}$ be a Boolean formula. Define the $r$-uniform  hypergraph $H$ on vertex set $A$ such that $a\in A^{(r)}$ is an edge if and only if
    $$\Gamma\left(f_1(a)\leq 0,\dots,f_m(a)\leq 0\right)=\texttt{true}.$$
    Then $H$, and any hypergraph isomorphic to $H$, is \textbf{quasi-algebraic} of complexity $(D,m)$.
\end{definition}

\noindent
Clearly, a semialgebraic hypergraph of complexity $(d,D,m)$ is also quasi-algebraic of complexity $(D,m)$. Therefore, the following theorem implies \Cref{thm:main}. The rest of the section is devoted to the proof of this theorem.

\begin{theorem}
     Let $r,D,m$ be positive integer constants. Let $H$ be an $r$-uniform quasi-algebraic hypergraph of complexity $(D,m)$ on $N$ vertices. Then $H$ contains a clique or an independent set of size $n$, where 
     $$N\leq \tw_{K}(O(n))\mbox{ for }K=\frac{D(D^2+6D+1)}{2}.$$
\end{theorem}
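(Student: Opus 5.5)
The proof generalizes the $D=2$ warm-up by induction on the degree. Write each $f_j=\sum_{I\in[r]^{(D)}}\phi_{j,I}(a_I)$ and aim for a decomposition
$$f_j(a)=\sum_{I\in[r]^{(\leq D)}}\sigma_{j,I}(a_I)$$
on a large subset $S$. Here each $\sigma_{j,I}:S^{(|I|)}\to\mathbb{R}$ should be exponentially separated, and $\sigma_{j,\emptyset}$ should be a constant. Once this holds, a final Ramsey step gives homogeneity.

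\textbf{Building the decomposition.} Go down level by level, $k=D,D-1,\dots,1$.

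At level $k$ we hold a family $\Phi_k$ of $O(1)$ functions on $S_k^{(k)}$ that are not yet separated. We apply \Cref{lemma:exp_sep_general} to $\Phi_k$. This gives $S_{k-1}\subset S_k$ with $|S_k|\leq \tw_{K_k}(|S_{k-1}|^{O(1)})$, where $K_k=3k^2/2+k/2+1$, together with $\sigma\sim\phi$ for every $\phi\in\Phi_k$.

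By the definition of $\sim$, each difference $\phi-\sigma$ is a sum of functions $\rho_J(a_J)$ with $|J|=k-1$. We regroup these remainders by the index set in $[r]$ they act on; for $k-1\geq 1$, each $\rho_J$ acts on a $(k-1)$-subset of $[r]$. Summing them gives the new $O(1)$ functions $\Phi_{k-1}$ on $(k-1)$-tuples, together with whatever level-$(k-1)$ pieces were already present. At $k=1$ the leftover $\rho$'s are constants, which we collect into $\sigma_{j,\emptyset}$.

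The $k=1$ step can use \Cref{lemma:exp_sep_general} with $D=1$. Alternatively, shifted-exponential subsequences cost a single exponential, since $K_1=3$ but the true cost is $2^{n^{O(1)}}$.

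The tower heights add, since $\tw_a(\tw_b(x))=\tw_{a+b-1}(x)$. Summing $K_k$ over $k\leq D$ gives roughly $D^3/2+O(D^2)$. The polynomial losses $n^{O(1)}$ at each stage are absorbed by the next tower's $O(\cdot)$, at cost at most one extra level each.

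\textbf{Final Ramsey step.} Color $S^{(2D)}$ (any $a\in S^{(r)}$ has its pairs of $D$-subsets living in $2D$ coordinates). For each $j$, record the relative order of all numbers $|\sigma_{j,I}(a_J)|$ with $J\in[2D]^{(|I|)}$, and whether any two of them are within a factor $10r^D$ of each other. Call a $2D$-tuple \emph{bad} if two such numbers with $J\neq J'$ are this close.

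Exponential separation forces a bad-monochromatic set to have size $O(1)$. The argument is as in the warm-up: for a fixed $I$, the values $\log_2|\sigma_{j,I}|$ differ by at least $1$. I expect this to be the main obstacle. Separation by a factor $2$ is weaker than $10r^D$, and different $I$'s could still collide. To handle this, I would first pass to every $c$-th element of the set, or apply \Cref{lemma:rainbow} to $\gamma=\log_{2}|\sigma|$ scaled by $1/\log_2(10r^D)$, so that a single function is separated by the larger factor. Cross-$I$ collisions are then excluded by the bad color.

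Theorem \Cref{thm:Ramsey} gives a monochromatic $T$ at cost $\tw_{2D}$. In $T$, every sum $f_j(x)$ for $x\in T^{(r)}$ has a unique dominant term whose index is fixed by the color. The domination holds because at most $2^r\binom{r}{\leq D}$ terms each differ by a factor $10r^D$, which needs the constant adjusted to roughly $2^{r+1}$. That term has constant sign, so each truth value $f_j(x)\leq0$ is constant on $T^{(r)}$, hence so is $\Gamma$, and $T$ is homogeneous.

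\textbf{Total height.} This gives $\sum_{k=1}^{D}(3k^2/2+k/2+1)+2D$ up to lower-order corrections. That is $D^3/2+O(D^2)$, which matches $K=D(D^2+6D+1)/2$ after carefully counting the $-1$ overlaps.
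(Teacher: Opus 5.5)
There is a gap at the step you yourself flagged: showing that the monochromatic $T$ does not receive the bad color. The overall architecture is the paper's. You peel off levels $D,\dots,1$ with \Cref{lemma:exp_sep_general}, push the $\rho_J$ remainders down one level, and then apply Ramsey on $S^{(2D)}$ with an order-pattern coloring plus a ``bad'' color.

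Your boosting step (thinning, or \Cref{lemma:rainbow} applied to $\log_2|\sigma|/\log_2(10r^D)$) only rules out collisions between two values of the \emph{same} function $\sigma_{j,I}$ at distinct tuples. For a collision between $\sigma_{j,I}(a_J)$ and $\sigma_{j,I'}(a_{J'})$ with $I\neq I'$, you say it is ``excluded by the bad color''. That is circular: the bad color is one of the colors Ramsey may hand you, and what must be shown is that a large monochromatic set cannot carry it. Nothing in your proposal prevents two different exponentially separated functions from taking values within a constant factor of each other on every $2D$-tuple.

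The missing idea, which is how the paper argues, is to exploit $J\neq J'$ by sliding a single coordinate.
\begin{itemize}
\item Suppose $T$ were bad. Apply Ramsey once more, coloring each $2D$-tuple by an offending pair. This gives $T_0\subset T$ with $|T_0|=2Du$ and a fixed pair $(j,I,J)$, $(j,I',J')$ with $J\neq J'$ that collides on all of $T_0^{(2D)}$.
\item Without loss of generality $|J|\ge|J'|$, so you can pick $i\in J\setminus J'$. Identify $T_0=[2Du]$, set $a=(u,2u,\dots,2Du)$, and let $a^{(p)}$ be $a$ with $a_i$ replaced by $a_i-p$, for $0\le p<u$.
\item The value $\sigma_{j,I'}(a^{(p)}_{J'})$ does not depend on $p$.
\item The $u$ numbers $\sigma_{j,I}(a^{(p)}_J)$ are values of one exponentially separated function at distinct tuples, so they spread by a factor of at least $2^{u-1}$.
\item Once $2^{u-1}>(10r^D)^2$, they cannot all lie within a factor $10r^D$ of a single number, which is a contradiction.
\end{itemize}
This handles $I=I'$ and $I\neq I'$ uniformly and uses only factor-$2$ separation, so your boosting step is unnecessary.

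A smaller point: your height count is too loose for a statement with an exact $K$. Polynomial losses cost no tower level, since $\tw_b(x)^{O(1)}\le\tw_b(O(x))$ for $b\ge 2$. Charging even one level per stage would already overshoot $K=4$ at $D=1$. The exact count, subtracting $1$ for each of the $D$ compositions, is $\sum_{k=1}^{D}K_k+2D-D=\frac{D(D+1)^2}{2}+2D$. This is at most $\frac{D(D^2+6D+1)}{2}$, with equality at $D=1$.
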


\begin{proof}
Let $A$ be the vertex set of $H$, let $F$ be a set of $D$-dependent functions $f:A^{(r)}\rightarrow \mathbb{R}$ of size $m$, and let $\Gamma: \{\texttt{false},\texttt{true}\}^{F}\rightarrow \{\texttt{false},\texttt{true}\}$ be a Boolean expression such that $a\in A^{(r)}$ is an edge of $H$ if and only if
$$\Gamma\left[(f(a)\leq 0)_{f\in F}\right]=\texttt{true}.$$

Fix some $f\in F$. Using that $f$ is $D$-dependent, for every $I\in [r]^{(D)}$, there exists a function  $\phi_{f,I}:A^{(D)}\rightarrow \mathbb{R}$  such that for every $a\in A^{(r)}$,  we have
$$f(a)=\sum_{I\in [r]^{(D)}}\phi_{f,I}(a_I).$$

In what follows, we recursively define a sequence of sets $A=S_0\supset S_1\supset\dots\supset S_D$, and for every $f\in F$ and $I\in [r]^{(\leq D)}$, two functions $\sigma_{f,I},\phi_{f,I}:A^{(|I|)}\rightarrow \mathbb{R}$ such that the following properties hold. For every $f\in F$ and $k\in \{0,\dots,D\}$,
\begin{enumerate}
    \item     $f(a)=\displaystyle\sum_{I\in [r]^{(D-k)}}\phi_{f,I}(a_I)+\sum_{\ell=D-k+1}^D\sum_{I\in [r]^{(\ell)}}\sigma_{f,I}(a_I),$
    \item if $k\geq 1$ and $I\in [r]^{(D-k+1)}$, then $\sigma_{f,I}$ is exponentially separated on $S_{k+1}$.
\end{enumerate}
Assume that for some $k\in \{0,\dots,D-1\}$, $S_k$ and functions $\phi_{f,I}$ are already defined for $I\in [r]^{(D-k)}$. Then we define $\sigma_{f,I}$ for $I\in [r]^{(D-k)}$, $\phi_{f,J}$ for $J\in [r]^{(D-k-1)}$, and $S_{k+1}$ as follows. Let 
$$\Phi=\{\phi_{f,I}:f\in F,I\in [r]^{(D-k)}\},$$
and apply \Cref{lemma:exp_sep_general} with the set of at most $s=m\binom{r}{D-k}$ functions $\Phi$ acting on $S_{k}^{(D-k)}$. Then there exists $S_{k+1}\subset S_k$ such that $|S_k|\leq \tw_{K}(|S_{k+1}|^{O(1)})$ with $K=3(D-k)^2/2+(D-k)/2+1$, and for every $\phi_{f,I}\in \Phi$, there exists $\sigma_{f,I}$ such that $\phi_{f,I}\sim \sigma_{f,I}$ and $\sigma_{f,I}$ is exponentially separated on $S_{k+1}$. In particular, there exist functions $\rho_{f,I,J}:S_{k+1}^{(D-k-1)}\rightarrow \mathbb{R}$ for $J\in I^{(D-k-1)}$ such that for every $a\in S_{k+1}^{(r)}$,
$$\phi_{f,I}(a_I)=\sigma_{f,I}(a_I)+\sum_{J\in I^{(D-k-1)}}\rho_{f,I,J}(a_J).$$
Then for $J\in [r]^{(D-k-1)}$, we define
$$\phi_{f,J}=\sum_{I\in [r]^{(D-k)}, J\subset I}\rho_{f,I,J}.$$
This concludes the definition of $S_{k+1}$ and functions $\sigma_{f,I}$ and $\phi_{f,J}$. It is easy to show by induction, that 1. and 2. are satisfied. We note that $\phi_{f,\emptyset}$ denotes a constant, which we view as a function $\phi_{f,\emptyset}()$, and we simply set $\sigma_{f,\emptyset}:=\phi_{f,\emptyset}$.

Set $S=S_{D}$, then we have the following two properties satisfied: for every $f\in F$,
\begin{enumerate}
    \item 
    $f(a)=\displaystyle\sum_{I\in [r]^{(\leq D)}}\sigma_{f,I}(a_I),$
    \item $\sigma_{f,I}$ is exponentially separated on $S$ for every $I\in [r]^{(\leq D)}$.
\end{enumerate}

Next, define a coloring of $S^{(2D)}$ as follows. Given $a\in S^{(2D)}$, let $Q_a$ be the sequence of all numbers $|\sigma_{f,I}(a_J)|$ for $f\in F$, $I\in [r]^{(\leq D)}$ and $J\subset [2D]^{(|I|)}$. Color the edge $a$ with color $0$ if there exists $f$, and $(I,J)$ and $(I',J')$ with $J\neq J'$ such that 
$$2r^{-D} |\sigma_{f,I}(a_J)|<|\sigma_{f,I'}(a_{J'})|<2r^{D}|\sigma_{f,I}(a_J)|.$$ 
If $a$ is not colored 0, then order the elements of $Q_a$ in increasing order, and let $R_a$ be the resulting sequence. For each entry $R_{a,i}$, write down the triple $(f,I,J)$ such that $R_{a,i}=|\sigma_{f,I}(a_J)|$, this gives an ordering $\chi$ of all possible triples $(f,I,J)$. Color $a$ with color $\chi$. This gives a coloring of $S^{(2D)}$ with $O(1)$ colors. Applying \Cref{thm:Ramsey}, there exists monochromatic $T\subset S$ such that $|S|\leq \tw_{2D}(O(|T|)$. 

\begin{claim}
 $T$ is not colored with 0, assuming $|T|$ is sufficiently large.
 \end{claim}
 \begin{proof}
Let $u=\lceil10D\log_2 r\rceil$. Assume to the contrary that $T$ is monochromatic of color $0$. Then, under the assumption that $|T|$ is sufficiently large, we can find $T_0\subset T$ of size $2Du$, and a pair of triplets $(f,I,J)$ and $(f,I',J')$ with $J\neq J'$ such that $2r^{-D}|\sigma_{f,I}(a_J)|< |\sigma_{f,I'}(a_{J'})|< 2r^{D}|\sigma_{f,I}(a_J)|$ for every $a\in T_0^{(2D)}$. This follows from a simple application of Ramsey's theorem. Relabel the elements of $T_{0}$ such that $T_{0}=[2Du]$, and let $a=(u,2u,\dots,2Du)\in T_0^{(2D)}$. Without loss of generality, $|J|\geq |J'|$, then there exists $j\in J\setminus J'$. Writing $a^{(p)}\in T_0^{(2D)}$ for the element we get by changing $a_{j}$ to $a_{j}-p$ in $a$, we have that for every $0\leq p<u$, $|\sigma_{f,I'}(a^{(p)}_{J'})|=|\sigma_{f,I'}(a_{J'})|$. On the other hand, the sequence $(|\sigma_{f,I}(a^{(p)}_J)|)_{p=0,\dots,u-1}$ is exponentially separated, so it contains two elements, whose ratio is larger than $2^{u-1}>4r^{2D}$. But then there is $p$ such that   $2r^{-D}|\sigma_{f,I}(a^{(p)}_J)|\geq |\sigma_{f,I'}(a^{(p)}_{J'})|$ or $2r^{D}|\sigma_{f,I}(a^{(p)}_J)|\leq |\sigma_{f,I'}(a^{(p)}_{J'})|$, contradiction.
\end{proof}

Therefore, we may assume that $T$ is monochromatic of some color $\chi\neq 0$. Let  $f\in F$ and $x\in T^{(r)}$, and consider
$$f(x)=\sum_{I\in [r]^{(\leq D)}}\sigma_{f,I}(x_I).$$
Assume that $I_0^{f,x}=I_0\in [r]^{(\leq D)}$ maximizes $|\sigma_{f,I}(x_I)|$, then using that $T$ is not of color $0$, we have $$r^{2D}|\sigma_{f,I}(x_I)|\leq |\sigma_{f,I_0}(x_{I_0})|$$ for every $I\neq I_0$. But as the sum contains $|[r]^{(\leq D)}|<2r^{D}$ elements, this means that the sign of $f(x)$ is the same as the sign of $\sigma_{f,I_0}(x_{I_0})$. The key observation is that $I_0$ does not depend on the choice of $x$. This follows from the more general statement that the order of the elements of the sequence $(\sigma_{f,I}(x_I))_{I\in [r]^{(\leq D)}}$ is completely determined by the color $\chi$.  Indeed, let $I,I'\in [r]^{(\leq D)}$ such that $I\neq I'$, and choose a $(2D)$-tuple $a\in T^{(2D)}$ whose first $|[I]\cup [I']|$ elements are the elements of $[I]\cup [I']$. Then $\sigma_{f,I}(x_I)=\sigma_{f,I}(a_J)$ and $\sigma_{f,I'}(x_{I'})=\sigma_{f,I'}(a_{J'})$  for some $J\neq J'\subset [2D]$ that depend only on $I$ and $I'$. As $a$ is colored by $\chi$, the relative order of $|\sigma_{f,I}(a_{J})|$ and $|\sigma_{f,I'}(a_{J'})|$ is completely determined by $\chi$.

To summarize, we have shown that for every $f\in F$, there exists $I_0^f\in [r]^{(\leq D)}$ such that the sign of $f(a)$ is the same as the sign of $\sigma_{f,I_0}(a_{I_0})$. Moreover, using that $\sigma_{f,I_0}$ is exponentially separated on $T$, this tells us that $f(a)$ has the same sign for every choice of $a$. Thus, 
$$\{f(a)\leq 0\}_{f\in F}\in \{\texttt{false},\texttt{true}\}^{F}$$
is constant on $T$, which means that $H[T]$ is a clique or an independent set.

We finish the proof by bounding the size of $T$. We have $|S_0|=N$ and $|S_{k}|\leq \tw_{K_k}(|S_{k+1}|^{O(1)})$ with $K_k=3(D-k)^2/2+(D-k)/2+1$ for $k=0,\dots,D-1$. Therefore, $N\leq \tw_{L}(|S|^{O(1)})$ with 
$$L=-(D-1)+\sum_{k=0}^{D-1}\frac{3(D-k)^2}{2}+\frac{D-k}{2}+1=\frac{D(D+1)^2}{2}+1.$$
Finally, we have $|S|\leq \tw_{2D}(O(|T|))$, which gives $N\leq \tw_{K}(O(|T|))$ with $K=\frac{D(D^2+6D+1)}{2}$.
\end{proof}

We observe that the bound $K=\frac{D(D^2+6D+1)}{2}$ satisfies $K<3D^3$ if $D\geq 2$. For $D=1$, we know the stronger bound $N\leq 2^{n^{O(1)}}$ from \cite{JinTomon}, so \Cref{thm:main} holds for all values of $D$.




\section*{Acknowledgments}
Both authors acknowledge the support of the Swedish Research Council grant VR 2023-03375.

\noindent
We would like to thank Zhihan Jin and J\'anos Pach for valuable discussions.

\end{document}